\documentclass[reqno]{amsart}
\usepackage[scale=0.75, centering, headheight=14pt]{geometry}
\usepackage[initials,nobysame]{amsrefs}
\usepackage[T1]{fontenc}
\usepackage{lmodern}
\usepackage[english]{babel}
\usepackage{esint}
\usepackage{mathtools}

\usepackage{xcolor,amsmath,mathrsfs,amssymb,accents,graphicx,epstopdf, amsthm}
\usepackage{ bbold }
\usepackage{comment}
\usepackage{float}
\usepackage{subfig}
\usepackage{tikz}
\usepackage{tikz}
\usepackage{bbm}
\usepackage{amsmath,amssymb,amsfonts,amsthm}
\usepackage{mathtools,accents}
\usepackage{mathrsfs}
\usepackage{xfrac}
\usepackage{array} 
\usepackage{aliascnt}

\usepackage{booktabs} 
\usepackage{array} 

\usepackage{verbatim} 
\usepackage{subfig} 

\usepackage{mathrsfs, dsfont}
\usepackage{amssymb}
\usepackage{amsthm}
\usepackage{amsmath,amsfonts,amssymb,esint}
\usepackage{graphics,color}
\usepackage{enumerate}
\usepackage{mathtools,centernot}

\usepackage{microtype}
\usepackage{paralist} 
\usepackage{cases}
\allowdisplaybreaks

\usepackage{braket}
\usepackage{bm}
\usepackage{xcolor}
\usepackage[colorlinks]{hyperref}
\usepackage{cleveref}
\addto\extrasenglish{}

\usepackage{aliascnt}

\numberwithin{equation}{section}
\makeatletter

\makeatother
\newtheorem{question}{Question}[section]
\newtheorem{theorem}{Theorem}[section]
\newtheorem{lemma}[theorem]{Lemma}
\newtheorem{definition}[theorem]{Definition}
\newtheorem{remark}[theorem]{Remark}
\newtheorem{proposition}[theorem]{Proposition}
\newtheorem{corollary}[theorem]{Corollary}

\newcommand{\N}{\mathbb{N}}
\newcommand{\R}{\mathbb{R}}

\newcommand{\Z}{\mathbb{Z}}
\newcommand{\T}{\mathbb{T}}
\newcommand{\Leb}[1]{{\mathscr L}^{#1}} 

\newcommand{\x}{\times}

\renewcommand{\a}{\alpha}

\newcommand{\e}{\varepsilon}

\newcommand{\s}{\sigma}

\renewcommand{\t}{\tau}


\renewcommand\div{\operatorname{div}}

\newcommand{\supp}{\operatorname{supp}}

\newcommand{\E}{{\mathbb E}}

\newcommand{\vo}{\vec{o}\@ifnextchar{^}{\,}{}}


 \newcommand{\uu}{{\mbox{\boldmath$u$}}}

 \newcommand{\tauV}{{\kern-3pt\tau}}

 \newcommand{\oVVVk}{\overline{\mbox{\boldmath$V$}}\kern-3pt}
 \newcommand{\tVVVk}{\tilde{\mbox{\boldmath$V$}}\kern-3pt}


%

	\title{On the zero-noise limit for SDE's singular at the initial time} 

\author [J. Pitcho]{Jules Pitcho}
\address{Jules Pitcho
	\hfill\break  GSSI, Via Michele Iacobucci, 2, 67100 L'Aquila, Italy \& ENS  de Lyon, UMPA, 46 all\'ee d'Italie, 
	69364 Lyon,
	France}
\email{jules.pitcho@gssi.it}

\begin{document}
\maketitle
\begin{abstract}
	We investigate the zero-noise limit for SDE's driven by Brownian motion with a divergence-free drift singular at the initial time and prove that a unique probability measure concentrated on the integral curves of the drift is selected. 
	More precisely, we prove uniqueness of the zero-noise limit for divergence-free drifts in $L^1_{loc}((0,T];BV(\T^d;\R^d))\cap L^q((0,T);L^p(\T^d;\R^d))$ where $p$ and $q$ satisfy a Prodi-Serrin condition. 
	The vector field constructed by Depauw \cite{Depauw} lies in this class  and we show that for almost every intial datum, the zero-noise limit selects a probability measure concentrated on several distinct integral curves of this vector field. 
\end{abstract}
\section{Introduction} 
This work studies selection  by a zero-noise limit of probability measures concentrated on the integral curves of a rough vector field. Let us begin by defining integral curves. 
\begin{definition}
	Consider a Borel vector field $b:[0,T]\times\T^d\to\R^d$. We shall say that a continuous curve  $\gamma_x:[0,T]\to\T^d$ is an integral curve of $b$ starting from $x\in\T^d$, if for every $t\in[0,T]$ we have
	\begin{equation}
	\gamma_x(t)-x=\int_0^tb(s,\gamma_x(s))ds \qquad\text{and}\qquad \int_0^T|b(s,\gamma_x(s))|ds<+\infty. 
	\end{equation}
\end{definition}
By the Cauchy-Lipschitz theory, existence and uniqueness of integral curves of $b$ holds when the time integral of the spatial Lipschitz constant of $b$ is finite. However, uniqueness of integral curves can fail almost everywhere when the vector field is rough. Such vector fields are constructed in \cite{pitcho2021nonuniqueness,Depauw,ambrosiocrippaedi,kumar24}. An interesting question is then to select a probability measure concentrated on the set of integral curves starting from $x$. 

Let us review some known selection results. The work of Ambrosio \cite{AmbBV}, building upon the work of DiPerna and Lions \cite{DPL89}, proves that for divergence-free vector fields $b\in L^1((0,T);BV(\T^d;\R^d))$, there exists an essentially unique\footnote{ essential uniqueness is understood in the following sense: for any other Borel family $\{\tilde\gamma_x\}$ satisfying \eqref{eqn_incomp}, it holds $\gamma_x=\tilde\gamma_x$ for Lebesgue almost every $x\in\T^d$.} Borel family of integral curves of $b$ $\{\gamma_x\}$\footnote{ by convention, when the indexing set of a family is not specified, we shall take it to be $\T^d$.} such that for every $t\in [0,T]$, we have 
\begin{equation}\label{eqn_incomp}
\int_{\T^d}\phi(x)dx=\int_{\T^d} \phi(\gamma_x(t))dx\qquad\forall \phi \in C(\T^d) . 
\end{equation}

In the one-dimensional setting, Bafico and Baldi \cite{baficobaldi82} show that for a class of autonomous vector fields, smooth except at a single point, the zero-noise limit selects a probability measure concentrated on two integral curves of $b$ with suitable weights. Delarue and Flandoli \cite{DelarueFlandoli14} also give another proof of the same result.

 In a series two works Bressan, Mazzola and Nguyen \cite{BressanMazzolaNguyen-Markovian23,BressanMazzolaNguyen-Diffusion23} study the one-dimensional problem for autonomous vector fields satisfying a suitable condition, and characterise the probability measures on integral curves compatible with a semigroup structure deterministic or Markovian. Furthermore, these authors show that the transition kernels of these semigroups are limits of the transition kernels of diffusion processes with smooth coefficients. 

We here investigate the zero-noise limit for a subclass of divergence-free vector fields $b\in L^1_{loc}((0,T];BV(\T^d;\R^d))$ including the vector field constructed by Depauw \cite{Depauw}, a two-dimensional vector field for which integral curves are almost everywhere non-unique. Importantly, and in contrast to \cite{baficobaldi82,DelarueFlandoli14}, upon taking the zero-noise limit, we will retain randomness in the initial data, which will be essential to the uniqueness of the zero-noise limit.  
 
 Previous work motivates this investigation. Building upon my work \cite{PitchoArmk23}, in collaboration with Mescolini and Sorella \cite{MescoliniPitcho25}, we show that for vector fields in $b\in L^1_{loc}((0,T];BV(\T^d;\R^d))\cap L^2((0,T)\times\T^d;\R^d)$, the vanishing diffusivity scheme and regularisation by convolution of the vector field select a unique solution of the continuity equation. In \cite{Pitcho24Int}, I also show that for bounded, divergence-free vector fields in $b\in L^1_{loc}((0,T];BV(\T^d;\R^d))$, there exists a unique probabilistic flow of $b$ under which the Lebesgue measure is invariant. For the vector field $b_{DP}$ constructed by Depauw \cite{Depauw}, which is in this class, for almost every initial condition, this flow is concentrated on several distinct integral curves of $b_{DP}$. Moreover, this flow is the weak limit of the classical flows of approximations by convolution of $b$. To introduce our result precisely, let us first fix notations and conventions. 
 
 \subsection{Notations and conventions}
 	 	 $\T^d\cong \R^d/\Z^d$ is the $d$-dimensional flat torus. $\Gamma_T$ is the space of continuous paths $C([0,T];\T^d)$. $e_t:\Gamma_T\ni \gamma\longmapsto \gamma(t)\in\T^d$ is the evaluation map at time $t\in[0,T]$. $\mathcal{P}(\Gamma_T)$ is the space of Radon probability measures on $\Gamma_T$. $\Leb{d}$ is the $d$-dimensional Lebesgue measure. $\delta_\gamma$ is the Dirac mass on $\gamma\in\Gamma_T$. $\E(Y)$ is the expactation of the random variable $Y$. $P_Y$ is the law of the random variable $Y$. The Borel $\s$-algebra of a topological space $X$ is denoted $\mathcal{B}(X)$. A measure on a metric space is always assumed to be a Radon measure.

\subsection{Review of SDE theory}
Consider the stochastic differential equation
\begin{equation}\label{eqn_SDE}\tag{SDE}
X^{\nu}_t(x)=x+\int_0^tb(s,X^{\nu}_s(x))ds+\nu W_t,\qquad t\in[0,T]
\end{equation}
on $\T^d$, where $b:[0,T]\times\T^d\to \R^d$ is a Borel vector field, $\nu> 0$, and $(W_t)_{t\in[0,T]}$ is a $\T^d$-valued Brownian motion started from zero on the canonical filtered probability space $(\Omega,\mathcal{F}, (\mathcal{F}_t)_{t\in[0,T]},P)$. Even when integral curves of $b$ starting from $x$ are non-unique, strong existence and strong uniqueness of solutions of \eqref{eqn_SDE} holds. 
This phenomenon is called regularisation by noise in the litterature. We say that  continuous adapted process $(X^\nu_t(x))_{t\in[0,T]}$ on the filtered probability space $(\Omega,\mathcal{F}, (\mathcal{F}_t)_{t\in[0,T]},P)$ is a strong solution, if $P$-almost surely, we have \eqref{eqn_SDE}. We say that strong uniqueness holds for \eqref{eqn_SDE}, if for any two strong solutions are indistinguishable. 

\bigskip 
Veretennikov \cite{Veretennikov82} proves strong existence and strong uniqueness for \eqref{eqn_SDE} when $b$ is bounded. Krylov and R\"ockner \cite{KrylovRockner05} prove strong existence and strong uniqueness for \eqref{eqn_SDE} when $b$ belongs to $L^q_p(T)$ defined below in \eqref{eqn_function_space} where $p$ and $q$ satisfy \eqref{eqn_prodi_serrin}. Fedrizzi and Flandoli \cite{FlandoliFedrizzi11} then give another proof of this result and further show that the random continuous sample paths $\{ (X^{\nu}_t (x))_{t\in[0,T]}\;:\;x\in\T^d\}$ have a continuous modification in $x$.

 In order to include randomness in the initial data, let us form the filtered probability space 
\begin{equation} \label{eqn_proba_space}
\mathscr{P}:=(\Omega\times \T^d, \mathcal{F}\otimes \mathcal{B}(\T^d),(\mathcal{F}_t\otimes \mathcal{B}(\T^d))_{t\in[0,T]},P\otimes\Leb{d}).
\end{equation} 
We also define the function space 
\begin{equation}\label{eqn_function_space}
L^q_p(T):=L^q((0,T);L^p(\T^d;\R^d)),
\end{equation}
which forms a Banach space under the norm $$\|b\|_{L^q_p}:=\Big(\int_0^T\big(\int_{\T^d}|b(t,x)|^pdx\big)^{q/p} dt\Big)^{1/q}. $$
We assume that the exponents $p$ and $q$ satisfy the following Prodi-Serrin condition 
\begin{equation}\label{eqn_prodi_serrin} 
\frac{d}{p}+\frac{2}{q}<1, \qquad p,q\in (1,+\infty].
\end{equation}
We then view $X^\nu=(X_t^{\nu})_{t\in[0,T]}$ as a random continuous process on $\mathscr{P}$, with randomness coming both from the Brownian motion and from the initial data. 
For future use, we record the following proposition extracted from \cite{FlandoliFedrizzi11}. 
\begin{proposition}\label{prop_noisy_flow} 
	Consider a Borel vector field $b:[0,T]\times\T^d\to \R^d$, and $p,q\in (1,+\infty]$ satisfying \eqref{eqn_prodi_serrin}. 
	Assume that $b\in L^q_p(T)$. Then there exists a continuous stochastic process $X^\nu$ on $\mathscr{P}$ defined in \eqref{eqn_proba_space} such that
	\begin{enumerate} 
		\item	 $P$-almost surely, we have 
		\begin{equation}
		X_t^{\nu}(x)=x+\int_0^tb(s,X^{\nu}_s(x))ds+\nu W_t \quad\text{and}\quad \int_0^T|b(s,X^{\nu}_s(x))|^2ds<+\infty\qquad\forall x\in\T^d;
		\end{equation}
		\item $P$-almost surely, the random family of continuous paths $\{(X^\nu_t(x))_{t\in[0,T]} : x\in\T^d\}$ depends continuously on $x$. 
	\end{enumerate} 
	Any two processes satisfying $(i)$ and $(ii)$ are indistinguishable. 
	\bigskip 
	
	If we further assume that $b$ is divergence-free, then we have
	\begin{equation}
	\E\phi(X_t^{\nu})=\int_{\T^d}\phi(x)dx\qquad\forall\phi\in L^1(\T^d)\;\text{ and }\;\forall t\in[0,T]. 
	\end{equation}
\end{proposition}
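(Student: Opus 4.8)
The plan is to follow the It\^o--Tanaka (Zvonkin) transformation used by Fedrizzi and Flandoli \cite{FlandoliFedrizzi11}, and to close the argument with a Fokker--Planck computation for the final incompressibility identity. For a fixed initial point $x$, strong existence and pathwise uniqueness for \eqref{eqn_SDE} under \eqref{eqn_prodi_serrin} are the content of \cite{KrylovRockner05}. The square integrability in $(i)$ follows from Krylov's estimate $\E\int_0^T f(s,X^\nu_s(x))\,ds\le C\|f\|_{L^{q/2}_{p/2}}$ applied to $f=|b|^2$: indeed $\||b|^2\|_{L^{q/2}_{p/2}}=\|b\|_{L^q_p}^2<+\infty$, and the admissibility requirement $\tfrac{2d}{p}+\tfrac{4}{q}<2$ for this estimate is precisely \eqref{eqn_prodi_serrin}. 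This settles $(i)$ for each fixed $x$.

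To promote this to the joint statement $(ii)$, I would solve, by maximal $L^q$--$L^p$ parabolic regularity, the vector-valued backward system
\[
\partial_t u+\tfrac{\nu^2}{2}\Delta u+b\cdot\nabla u-\lambda u=-b,\qquad u(T,\cdot)=0,
\]
whose solution belongs to $L^q((0,T);W^{2,p})\cap W^{1,q}((0,T);L^p)$. Since \eqref{eqn_prodi_serrin} forces $p>d$, this space embeds into $C([0,T];C^{1,\alpha}(\T^d))$ with $\alpha=1-d/p$, and for $\lambda$ large one has $\|\nabla u\|_{C([0,T];L^\infty)}\le\tfrac12$, so that $\phi_t:=\id+u(t,\cdot)$ is for every $t$ a diffeomorphism of $\T^d$ with $C^1$ inverse $\psi_t$. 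Applying Krylov's generalised It\^o formula to $u(t,X^\nu_t(x))$ and inserting the equation for $u$, the process $Y^\nu_t:=\phi_t(X^\nu_t(x))$ is found to solve an SDE with Lipschitz drift $\lambda\,u\circ\psi$ and spatially H\"older diffusion $\nu(\id+\nabla u)\circ\psi$. These coefficients are regular enough to yield the moment bound $\E\sup_{t\le T}|Y^\nu_t(y)-Y^\nu_t(y')|^{2m}\le C|y-y'|^{2m}$; Kolmogorov's continuity theorem then gives a modification continuous in the initial point, and transforming back via $X^\nu_t(x)=\psi_t(Y^\nu_t(\phi_0(x)))$ produces the $P$-almost sure continuity in $x$ asserted in $(ii)$. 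Indistinguishability of two processes obeying $(i)$--$(ii)$ is then immediate: pathwise uniqueness gives coincidence $P$-almost surely for each fixed $x$, hence simultaneously on a countable dense set of initial data, and joint continuity extends this to all $x\in\T^d$.

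For the incompressibility identity, assume $\div b=0$ and fix $\phi\in C^\infty(\T^d)$. The function $v(t,x):=\int_\Omega\phi(X^\nu_t(x))\,dP$ solves the backward Kolmogorov equation $\partial_t v=\tfrac{\nu^2}{2}\Delta v+b\cdot\nabla v$ with $v(0,\cdot)=\phi$. Integrating over the torus, the Laplacian term has vanishing integral and, since $\div b=0$,
\[
\frac{d}{dt}\int_{\T^d}v(t,x)\,dx=\int_{\T^d}b\cdot\nabla v\,dx=-\int_{\T^d}(\div b)\,v\,dx=0.
\]
By Fubini and the definition of $P\otimes\Leb{d}$ on $\mathscr{P}$ one has $\E\phi(X^\nu_t)=\int_{\T^d}v(t,x)\,dx$, which is therefore constant in $t$ and equal to its value $\int_{\T^d}\phi\,dx$ at $t=0$. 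A density argument extends the identity from $C^\infty(\T^d)$ to every $\phi\in L^1(\T^d)$.

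The principal obstacle I anticipate is the rigorous justification of the two PDE-based steps against the low regularity of $b$. In $(ii)$ one must verify that the H\"older diffusion coefficient of the transformed equation is genuinely regular enough for the stated moment bound and that Krylov's It\^o formula applies to $u$, which has only Sobolev second derivatives; this is exactly where the sharp parabolic embedding permitted by \eqref{eqn_prodi_serrin} is used. In the incompressibility step the regularity of $v$ and the integration by parts $\int b\cdot\nabla v=-\int(\div b)v$ hold only in a distributional sense, so I would carry them out on a smooth approximation $b_\eta$ of $b$ and pass to the limit, using the Krylov estimate of the first paragraph to control the resulting error terms. Alternatively, the same conclusion follows by noting that the law of $X^\nu_t$ on $\mathscr{P}$ solves the Fokker--Planck equation with initial datum $\Leb{d}$, that $\Leb{d}$ is a stationary solution because $\tfrac{\nu^2}{2}\Delta 1-\div(b\cdot 1)=-\div b=0$, and invoking uniqueness for that equation in the present class.
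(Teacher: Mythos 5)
The paper offers no proof of this proposition at all: it is recorded as a result \emph{extracted from} \cite{FlandoliFedrizzi11}, with strong existence and pathwise uniqueness for each fixed $x$ coming from \cite{KrylovRockner05}. Your proposal is, in substance, a reconstruction of that cited proof: Krylov's estimate with exponents $(p/2,q/2)$ for the $L^2$ bound in $(i)$ (your exponent arithmetic $\tfrac{2d}{p}+\tfrac{4}{q}<2\Leftrightarrow\eqref{eqn_prodi_serrin}$ is right), the Zvonkin/It\^o--Tanaka transformation plus Kolmogorov's continuity theorem for $(ii)$, and pathwise uniqueness on a countable dense set of initial data plus continuity for indistinguishability. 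So you follow the same route as the source the paper leans on, and the outline is sound. The places you gloss over are exactly the hard points of \cite{FlandoliFedrizzi11}: the bound $\E\sup_{t\le T}|Y^\nu_t(y)-Y^\nu_t(y')|^{2m}\le C|y-y'|^{2m}$ does \emph{not} follow from mere spatial H\"older continuity of the diffusion coefficient $\nu(\id+\nabla u)\circ\psi$ (H\"older diffusion coefficients in general yield neither Lipschitz moment estimates nor even pathwise uniqueness); what makes it work is that $\nabla u$ has Sobolev regularity, so that increments of $\nabla u$ are controlled pointwise by maximal functions of $\nabla^2 u$, combined with exponential integrability of the corresponding additive functional along solutions via Khasminskii/Krylov-type lemmas. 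Also the parabolic embedding exponent should be $\alpha<1-d/p-2/q$ rather than $1-d/p$; harmless, since \eqref{eqn_prodi_serrin} keeps it positive.

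The one step that is wrong as written is the incompressibility argument. For a time-dependent drift --- and the drifts this paper cares about, e.g.\ Depauw's field, are genuinely time-dependent --- the function $v(t,x):=\E\,\phi(X^\nu_t(x))$ does \emph{not} solve $\partial_t v=\tfrac{\nu^2}{2}\Delta v+b\cdot\nabla v$ with $v(0,\cdot)=\phi$; a one-parameter Kolmogorov equation of that form holds only for autonomous $b$. The repair is either to fix the terminal time $t$ and use the two-parameter transition function $w(s,x):=\E\,\phi(X^\nu_{s,t}(x))$, which solves the backward equation $\partial_s w+\tfrac{\nu^2}{2}\Delta w+b(s,\cdot)\cdot\nabla w=0$ with $w(t,\cdot)=\phi$, so that (after mollifying $b$ and passing to the limit, as you suggest) $\tfrac{d}{ds}\int_{\T^d}w(s,x)\,dx=0$ and hence $\int_{\T^d}w(0,x)\,dx=\int_{\T^d}\phi(x)\,dx$; or, better, the alternative you yourself state at the end: the curve of laws of $X^\nu_t$ on $\mathscr{P}$ solves the Fokker--Planck equation with initial datum $\Leb{d}$, the constant density $1$ is a solution because $\div b=0$, and uniqueness for the Fokker--Planck equation in the Prodi--Serrin class (available by duality with the very parabolic problem used in your Zvonkin step) forces the law to remain $\Leb{d}$. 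With that substitution, your proposal is a faithful and correct account of the result the paper imports without proof.
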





Under the hypothesis of \Cref{prop_noisy_flow}, we can study the zero-noise limit for \eqref{eqn_SDE}, whilst retaining randomness in the initial data. 
This leads to the following definition. 
\begin{definition}
	Consider a Borel vector field $b:[0,T]\times \T^d\to \R^d$ in $L^q_p(T)$ and assume that \eqref{eqn_prodi_serrin} holds. We shall say that a probability measure $\eta$ in $\mathcal{P}(\Gamma_T)$ is a zero-noise flow of $b$, if there exists a sequence $(\nu_n)_{n\in\N}$ of real numbers in $(0,1)$ such that $\nu_n \downarrow 0$ and the laws  $P_{X^{\nu_n}}$ converge to $\eta$ in $\mathcal{P}(\Gamma_T)$ as $n\to+\infty$. 
\end{definition}
The following question is then natural. 
\begin{question}
Consider a Borel vector field $b:[0,T]\times\T^d\to \R^d$ in $L^q_p(T)$ and assume that \eqref{eqn_prodi_serrin} holds.	What are the minimal assumptions on $b$ such that there exists a unique zero-noise flow of $b$? 
\end{question}
We will prove in \Cref{prop_vanishing_noise} that a zero-noise flow of $b$ is concentrated on integral curves of $b$ and is incompressible if $b$ is divergence-free. Ambrosio's work \cite{AmbBV} thus implies that for the class of divergence-free vector fields in $L^1((0,T);BV(\T^d;\R^d))$, there exists a unique zero-noise flow of $b$ induced by a flow map, which he called the regular Lagrangian flow of $b$.  In \cite{AmbBV}, the uniqueness of the regular Lagrangian flow is deduced from the uniqueness of bounded weak solutions for the initial value problems for continuity equation.
 However for the vector field $b_{DP}$ constructed by Depauw \cite{Depauw}, there are infinitely many bounded weak solution to these initial value problems. There is therefore no reason \emph{a priori} for uniqueness of the zero-noise flow of $b_{DP}$ nor for zero-noise flows of $b_{DP}$ to be induced by a flow map. 
 
 In this work we show for vector fields $b$ in $L^1_{loc}((0,T];BV(\T^d;\R^d))\cap L^q_p(T)$ such that \eqref{eqn_prodi_serrin} holds, there exists a unique zero-noise flow of $b$. This class includes $b_{DP}$ for which the zero-noise flow is concentrated on several
 distinct integral curves of $b_{DP}$ for almost every initial datum.
Let us now present a useful tool for our purpose: the disintegration of a measure with respect to a Borel map and a target measure used in the study of linear transport in \cite{ABC14,BianchiniBonicatto20,Pitcho24Int}.
\subsection{Disintegration of a measure} \label{subsec_disintegration}
Let $X$ and $Y$ be a separable metric spaces, $\mu$ a positive measure on $X$, $\nu$ a positive measure on $Y$ and $f:X\to Y$ a Borel map such that $f_\#\mu=\nu$. Assume that $\mu$ is tight. Then there exists a Borel family of probability measures $\{\mu_y:y\in Y\}$ of measures on $Y$ such that 
\begin{enumerate}
	\item $\mu_y$ is concentrated on the level set $E_y:=f^{-1}(y)$ for every $y\in Y$;
	\item the measure $\mu$ can be decomposed as $\mu=\int_Y\mu_yd\nu(y)$ , which means that 
	\begin{equation}
	\mu(A)=\int_Y\mu_y(A)d\nu(y). 
	\end{equation}
\end{enumerate}
Any family satisfying $(i)$ and $(ii)$ is called a \emph{disintegration} of $\mu$ with respect to $f$ and $\nu$. The disintegration is essentially unique in the following sense: for any other disintegration $\{\tilde\mu_y: y\in Y\}$, it holds $\mu_y=\tilde\mu_y$ for $\nu$-a.e. $y\in Y$. 
We also have
\begin{equation}\label{eqn_int_disintegration}
\int_X \phi d\mu=\int_Y\Big[\int_{E_y}\phi d\mu_y\Big]d\nu(y)
\end{equation}
for every $\phi \in L^1(X,\mu)$. 

\bigskip 

We now give a useful fact.
Let $g:X\to X$ and $h:X\to Y$ be Borel maps 
such that:
\begin{itemize}
	\item[(P)] $h_\#\mu=\nu$ and for every $y\in Y$, we have $g^{-1}((h^{-1}(y))^c)=(f^{-1}(y))^c$. 
\end{itemize} 
The following is true. 
\begin{lemma} \label{lem_disintegration_push_forward} 
	In the context of this paragraph, if $\{\mu_y : y\in Y\}$ is a disintegration of $\mu$ with respect to $ f$ and $\nu$, then $\{g_\# \mu_y : y\in Y\}$ is a disintegration of $g_\#\mu$ with respect to $ h$ and $\nu$. 
\end{lemma}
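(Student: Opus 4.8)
The plan is to verify the two defining properties of a disintegration for the candidate family $\{g_\#\mu_y : y \in Y\}$ directly from the definition of pushforward and the compatibility hypothesis (P); I expect no genuinely hard step, since the entire content of the statement is encoded in (P). For the concentration property (i), I would fix $y \in Y$. By definition of the pushforward, $g_\#\mu_y\big((h^{-1}(y))^c\big) = \mu_y\big(g^{-1}((h^{-1}(y))^c)\big)$. Hypothesis (P) identifies $g^{-1}((h^{-1}(y))^c) = (f^{-1}(y))^c = E_y^c$, and since $\{\mu_y\}$ disintegrates $\mu$ along $f$, the measure $\mu_y$ is concentrated on $E_y$, so $\mu_y(E_y^c) = 0$. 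Combining these gives $g_\#\mu_y\big((h^{-1}(y))^c\big) = 0$, i.e. $g_\#\mu_y$ is concentrated on $h^{-1}(y)$, which is exactly property (i) for the new family. Note that only the inclusion $g^{-1}((h^{-1}(y))^c) \subseteq E_y^c$ is actually used here.

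For the decomposition property (ii), I would take an arbitrary Borel set $A \subseteq X$ and chain three elementary identities: $g_\#\mu(A) = \mu(g^{-1}(A))$ by definition of pushforward; $\mu(g^{-1}(A)) = \int_Y \mu_y(g^{-1}(A))\,d\nu(y)$ by property (ii) of the original disintegration applied to the Borel set $g^{-1}(A)$; and $\mu_y(g^{-1}(A)) = g_\#\mu_y(A)$ again by definition of pushforward. Chaining these produces $g_\#\mu(A) = \int_Y g_\#\mu_y(A)\,d\nu(y)$ for every Borel $A$, which is property (ii) for the new family.

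Finally I would dispatch the well-posedness bookkeeping. Each $g_\#\mu_y$ is a probability measure since $g$ is Borel and $\mu_y$ is, the assignment $y \mapsto g_\#\mu_y$ is Borel because $y \mapsto \mu_y$ is Borel and post-composition with the fixed pushforward by $g$ preserves Borel measurability, and $g_\#\mu$ is again a Radon measure on the separable metric space $X$. Moreover, the consistency condition $h_\#(g_\#\mu) = \nu$ needed for the framework follows automatically from (i) and (ii): for every Borel $B \subseteq Y$ one has $g_\#\mu_y(h^{-1}(B))$ equal to $1$ when $y \in B$ and to $0$ otherwise, by (i), whence $h_\#(g_\#\mu)(B) = \int_Y g_\#\mu_y(h^{-1}(B))\,d\nu(y) = \nu(B)$. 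The only points demanding a little care are this measurability of the new family and the tightness ensuring the disintegration framework applies, but both are routine in the present setting. In short, the lemma is a formal consequence of the definitions once (P) is in hand, and I anticipate no real obstacle.
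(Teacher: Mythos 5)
Your proof is correct and follows essentially the same route as the paper: property (i) is obtained by combining the pushforward definition with hypothesis (P) and the concentration of $\mu_y$ on $f^{-1}(y)$, and property (ii) by applying the original disintegration identity to the Borel set $g^{-1}(A)$. The additional bookkeeping you include (Borel measurability of $y\mapsto g_\#\mu_y$ and the consistency $h_\#(g_\#\mu)=\nu$) is left implicit in the paper but is correct and harmless.
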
 
\begin{proof}
	Let $y\in Y$. Observe that $g_\#\mu_y((h^{-1}(y))^c)=\mu_y(g^{-1}(h^{-1}(y))^c)=\mu_y((f^{-1}(y))^c)=0$ where we have used (P) in the second to last equality and that $\mu_y$ is concentrated on $f^{-1}(y)$ in the last equality. 
	So	$g_\#\mu_y$ is supported on $h^{-1}(y)$, and since $y$ was arbitrary, this proves $(i)$. 
	
	Let $A$ a Borel set in $X$. Then as $g^{-1}(A)$ is a Borel set in $X$, it follows that 
	\begin{equation}
	g_\#\mu(A)=\mu(g^{-1}(A))=\int_Y \mu_y (g^{-1}(A))d\nu(y)=\int_Y g_\#\mu_y (A)d\nu(y),
	\end{equation}
	which gives $(ii)$. 
\end{proof}
\subsection{Statement of the result} 
Recall that we denote by $b_{DP}:[0,T]\times\T^d\to \R^d$ the vector field constructed by Depauw in \cite{Depauw} (see the \Cref{sec_const_depauw} for a construction). 
The following is the main result of this paper. 

\begin{theorem}\label{thm_main} 
	Consider a divergence-free Borel vector field $b:[0,T]\times\T^d\to \R^d$, and $p,q\in (1,+\infty]$ satisfying \eqref{eqn_prodi_serrin}. Assume that $b\in L^1_{loc}((0,T];BV(\T^d;\R^d))\cap L^q_p(T)$. Then there exists a unique zero-noise flow $\eta$ of $b$. Furthermore, 
	\begin{enumerate} 
		\item for every Borel vector field $a$ such that $a=b$ $\Leb{d+1}$-a.e., $\eta$ is also the unique zero-noise flow of $a$;
	\item 	there exists a Borel family $\{\gamma_{y}\}$ in $\Gamma_T$, a Borel family of probability measures $\{\tilde\nu_x\}$ on $\T^d$ such that for every disintegration $\{\eta_{0,x}\}$ of $\eta$ with respect to $e_0$ and $\Leb{d}$, we have
	\begin{equation}
\eta_{0,x}=	\int_{\T^d}\delta_{\gamma_{y}}d\tilde\nu_x(y) \qquad\text{for $\Leb{d}$-a.e. $x\in\T^d$},
	\end{equation}
	and for $b=b_{DP}$, the probability measures $\tilde\nu_x$ are not Dirac masses for $\Leb{d}$-a.e. $x\in\T^d$. 
	\end{enumerate}
	\end{theorem}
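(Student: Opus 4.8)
The plan is to separate the interval $[s,T]$ with $s>0$, on which $b$ is a genuine DiPerna--Lions field, from the singular initial layer near $t=0$, and to let the randomness in the initial datum encode the branching that survives at $t=0$. Existence of a zero-noise flow follows from tightness of the laws $\{P_{X^\nu}\}_{\nu\in(0,1)}$: writing $X^\nu_t-X^\nu_r=\int_r^t b(\sigma,X^\nu_\sigma)\,d\sigma+\nu(W_t-W_r)$ and bounding the drift increment by the Krylov estimate $\E\int_0^T|b(\sigma,X^\nu_\sigma)|\,d\sigma\le C\|b\|_{L^q_p}$, valid under \eqref{eqn_prodi_serrin}, one gets a uniform modulus of continuity and hence relative compactness in $\mathcal{P}(\Gamma_T)$; any subsequential limit is a zero-noise flow. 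By \Cref{prop_vanishing_noise} every zero-noise flow $\eta$ is concentrated on integral curves of $b$ and satisfies $(e_t)_\#\eta=\Leb{d}$ for all $t\in[0,T]$.

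For uniqueness, fix $s\in(0,T]$. Since $b\in L^1([s,T];BV(\T^d;\R^d))$ is divergence-free, Ambrosio's theory \cite{AmbBV} applies on $[s,T]$: any measure on $C([s,T];\T^d)$ concentrated on integral curves of $b$ with marginals equal to $\Leb{d}$ is the push-forward of $\Leb{d}$ by the regular Lagrangian flow, hence is uniquely determined. Applied to the restriction $\eta|_{[s,T]}$ of any zero-noise flow, this shows $\eta|_{[s,T]}$ is the same for all zero-noise flows, so all finite-dimensional marginals $(e_{t_1},\dots,e_{t_n})_\#\eta$ with $t_1,\dots,t_n>0$ are uniquely determined. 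Letting one time tend to $0$ and using that $e_r\to e_0$ pointwise on $\Gamma_T$ together with the compactness of $\T^d$, the marginals involving $t=0$ are determined as weak limits as well. As the Borel $\sigma$-algebra of $\Gamma_T$ is generated by the evaluation maps, two zero-noise flows with the same finite-dimensional marginals coincide; with tightness this also upgrades the convergence to $P_{X^\nu}\to\eta$ as $\nu\downarrow0$.

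Claim $(i)$ reflects that the noisy flow depends only on the $\Leb{d+1}$-class of the drift. If $a=b$ $\Leb{d+1}$-a.e.\ then $a\in L^q_p(T)$ with $\|a-b\|_{L^q_p}=0$, so the Krylov estimate gives $\E\int_0^T|a-b|(\sigma,X^\nu_\sigma)\,d\sigma=0$; hence the process $X^\nu$ of \Cref{prop_noisy_flow} also solves \eqref{eqn_SDE} with drift $a$, and by the strong uniqueness in \Cref{prop_noisy_flow} the two processes are indistinguishable. Thus $P_{X^\nu_a}=P_{X^\nu_b}$ for every $\nu$, the zero-noise flows of $a$ and $b$ coincide, and by the uniqueness just proved $\eta$ is the unique zero-noise flow of $a$.

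For $(ii)$, fix $t_0\in(0,T]$ and disintegrate $\eta$ with respect to $e_{t_0}$ and $\Leb{d}$. I first show this disintegration consists of Dirac masses: for $\eta$-a.e.\ $\gamma$, forward uniqueness of the regular Lagrangian flow on $[t_0,T]$ and backward uniqueness on each $[s,t_0]$ (the time-reversed field is again divergence-free and in $L^1(BV)$) determine $\gamma|_{[s,t_0]}$ from $\gamma(t_0)$ for every $s=t_0/n$; passing to the limit $n\to\infty$ and invoking continuity fixes $\gamma(0)$, hence the whole path. This yields a Borel family $\{\gamma_y\}$ with $\gamma_y(t_0)=y$ and $\eta=\int_{\T^d}\delta_{\gamma_y}\,d\Leb{d}(y)$. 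Setting $G(y):=\gamma_y(0)$, incompressibility gives $G_\#\Leb{d}=\Leb{d}$; disintegrating $\Leb{d}$ with respect to $G$ and $\Leb{d}$ produces $\{\tilde\nu_x\}$ concentrated on $G^{-1}(x)$, and $\{\int_{\T^d}\delta_{\gamma_y}\,d\tilde\nu_x(y)\}$ is then a disintegration of $\eta$ with respect to $G\circ e_{t_0}$. Since $G\circ e_{t_0}=e_0$ holds $\eta$-a.e., \Cref{lem_disintegration_push_forward} (with the $\eta$-a.e.\ identity $g:\gamma\mapsto\gamma_{\gamma(t_0)}$, $h=e_0$ and $f=G\circ e_{t_0}$, so that $h\circ g=f$ and $g_\#\eta=\eta$) identifies it with the disintegration with respect to $e_0$, giving $\eta_{0,x}=\int_{\T^d}\delta_{\gamma_y}\,d\tilde\nu_x(y)$ for $\Leb{d}$-a.e.\ $x$. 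Finally, $\tilde\nu_x$ is not a Dirac mass precisely when the fibre $G^{-1}(x)$ is non-trivial, i.e.\ when several integral curves emanate from $x$; for $b=b_{DP}$ this non-injectivity of $G$ is exactly the branching of the Depauw flow established in \Cref{sec_const_depauw}. The main obstacle I anticipate is this essential injectivity of $e_{t_0}$: one must propagate the $[s,t_0]$ backward uniqueness uniformly as $s\downarrow0$ and extract a genuinely Borel family $\{\gamma_y\}$, while the $BV$-norm of $b$ blows up at the initial time, and it is here that uniqueness at positive times must be reconciled with the non-trivial initial branching recorded by the non-Dirac $\tilde\nu_x$.
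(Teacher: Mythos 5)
Your existence and uniqueness arguments, and your proof of $(i)$, are essentially correct and run parallel to the paper's: where you restrict paths to $[s,T]$, apply Ambrosio's theorem there, and recover $\eta$ from its finite-dimensional marginals, the paper pushes forward by the stopping maps $S^\tau:\gamma(\cdot)\mapsto\gamma(\tau\vee\cdot)$, applies \Cref{thm_ambrosio} to the truncated field $b^\tau$, and lets $\tau\downarrow 0$ (\Cref{prop_unique_incompressible}); these are interchangeable. For $(i)$ your route through strong uniqueness of the SDE is different from the paper's, which works at the level of the limit measure (any zero-noise flow of $a$ is incompressible, hence concentrated on integral curves of $b$ as well, and \Cref{prop_unique_incompressible} applies), but both are valid. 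The first half of $(ii)$ --- Dirac disintegration at a positive time, then disintegration of $\Leb{d}$ through $G(y)=\gamma_y(0)$ --- also matches the paper, which performs this at $t_0=T$ via the time-reversed field and the measure $\nu=(e_0,e_T)_\#\eta$.

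The genuine gap is the final claim of $(ii)$, the stochasticity for $b=b_{DP}$, for which you offer no proof. First, your reduction is logically wrong: ``$\tilde\nu_x$ is not a Dirac mass precisely when the fibre $G^{-1}(x)$ is non-trivial'' is false --- a disintegration can be Dirac almost everywhere even when fibres are uncountable (the relevant notion is essential injectivity of $G$ modulo null sets, not fibre cardinality), and even failure of essential injectivity would only give non-Dirac $\tilde\nu_x$ on a set of \emph{positive} measure, whereas the theorem asserts this for \emph{almost every} $x$. Second, and more importantly, \Cref{sec_const_depauw} does not establish any ``branching of the Depauw flow'': it only constructs two bounded densities $\rho^B,\rho^W$ solving \eqref{eqn_pde} with $\rho^B(0,\cdot)=\rho^W(0,\cdot)=1/2$, $\rho^B+\rho^W\equiv 1$, and essentially disjoint supports at time $T$. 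The missing chain of reasoning --- the content of the paper's Section \ref{sec_stoch} --- is: lift $\rho^B$ and $\rho^W$ to measures $\eta^B,\eta^W\in\mathcal{P}(\Gamma_T)$ concentrated on integral curves of $b_{DP}$ via the superposition principle; observe that $(e_t)_\#\tfrac12(\eta^B+\eta^W)=\Leb{d}$ for every $t$, so that \Cref{prop_unique_incompressible} identifies $\eta=\tfrac12(\eta^B+\eta^W)$; then disintegrate and use the disjointness of $\supp\rho^B(T,\cdot)$ and $\supp\rho^W(T,\cdot)$ to show that for $\Leb{2}$-a.e.\ $x$ the conditional measures obtained from $\eta^B$ and $\eta^W$ (pushed forward to the time-$T$ variable) are mutually singular probability measures, whence $\tilde\nu_x$ is a nontrivial average of two mutually singular probability measures and cannot be a Dirac mass. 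Without superposition, the identification of $\eta$ by the uniqueness lemma, and the mutual-singularity argument at the final time, the non-Dirac claim is unsupported.
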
 

	It would be interesting to understand whether, in the context of this theorem, the convergence to the zero-noise flow can be strengthened. 
It would also be interesting to construct vector fields for which uniqueness of the zero-noise flow fails. Note that vector fields for which the vanishing diffusivity scheme is not a selection criterion for the continuity equation are constructed in \cite{colombo2022anomalous, huysmans2023nonuniqueness}. Note also that vector fields for which smooth regularisation is not a selection criterion are constructed in \cite{Pitcho23selection, DeLellis_Giri22,colombo2022anomalous,CiampaCrippaSpirito19}. 

\subsection{Our argument and organisation of this paper} 
The existence of a zero-noise flow follows by tightness of the family of laws $\{P_{X^\nu}\}_{\nu>0}$ and we show that it must be concentrated on integral curves of the vector field. For the uniqueness part of our result, we first observe that a zero-noise flow of a divergence-free vector field must satisfy an incompressibility condition. For vector fields which are further assumed to be in $L^1_{loc}((0,T];BV(\T^d;\R^d))$, we then show that there is a unique probability measure concentrated on integral curves of the vector field satisfying this incompressibility condition, whence it must be the zero-noise flow. The proof of the integral representation of point $(ii)$ and that this representation is stochastic in nature for $b_{DP}$ draws from \cite{Pitcho24Int} and utilises the disintegration of a measure.

In Section \ref{sec_uniqueness}, we prove existence and uniqueness of the zero-noise flow for the vector fields adressed by our theorem. In Section \ref{sec_stoch}, we prove the integral representation of point $(ii)$ and show that it is stochastic for $b_{DP}$. 
\subsection*{Acknowledgements}
This work was completed at the Gran Sasso Science Institute, whose hospitality I gratefully acknowledge. I am also thankful to Jan Burczak and L\'aszl\'o Sz\'ekelyhidi Jr. for their encouragements to write this paper. I am thankful to the anonymous reviewers for useful suggestions.
\section{Existence and uniqueness of the zero-noise flow}\label{sec_uniqueness}
\subsection{Existence of a zero-noise flow} The following will serve as our existence result. We also gather useful properties of zero-noise flows for divergence-free vector fields. 
\begin{proposition}\label{prop_vanishing_noise}
	Consider a divergence-free Borel vector field $b:[0,T]\times\T^d\to\R^d$ and $p,q\in (1,+\infty]$ satisfying \eqref{eqn_prodi_serrin}. Assume that $b\in L^q_p(T)$. Then there exists a zero-noise flow $\eta$ of $b$, which further satisfies
	\begin{enumerate} 
\item	$\eta$ is concentrated on integral curves of $b$, i.e. 
\begin{equation}
\int_{\Gamma_T}\Big|\gamma(t)-\gamma(0)-\int_0^tb(s,\gamma(s))ds\Big|\eta(d\gamma) \qquad\forall t\in[0,T];
\end{equation}
\item for every $t\in[0,T]$, we have $(e_t)_\#\eta=\Leb{d}$. 
	\end{enumerate} 
\end{proposition}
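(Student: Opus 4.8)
The plan is to obtain $\eta$ as a subsequential weak limit of the laws $P_{X^\nu}$ and then to read off its properties. Since $b$ is divergence-free, the last display of \Cref{prop_noisy_flow} applied with $\phi=|b(r,\cdot)|^a$ shows that every approximant is already incompressible, $(e_t)_\#P_{X^\nu}=\Leb{d}$ for all $t\in[0,T]$ and all $\nu$, and that $\E|b(r,X^\nu_r)|^a=\|b(r,\cdot)\|_{L^a(\T^d)}^a$ whenever $b(r,\cdot)\in L^a(\T^d)$. First I would establish tightness of $\{P_{X^\nu}\}_{\nu\in(0,1)}$ in $\mathcal{P}(\Gamma_T)$. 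Writing $X^\nu_t=x+\int_0^t b(s,X^\nu_s)\,ds+\nu W_t$, the Brownian part is tight uniformly in $\nu\in(0,1)$ because its modulus of continuity is dominated by that of $W$. For the drift integral, note that \eqref{eqn_prodi_serrin} forces $q>2$, hence $q'=q/(q-1)<2$, and a short computation using \eqref{eqn_prodi_serrin} gives $q'<\min(p,q)$; choosing $a\in(q',\min(p,q)]$ and combining Jensen with Hölder in time and the identity above yields a Kolmogorov--Chentsov bound $\E\bigl|\int_s^t b(r,X^\nu_r)\,dr\bigr|^a\le C|t-s|^{a/q'}$ with $a/q'>1$, uniformly in $\nu$. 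Tightness then follows, and Prokhorov's theorem furnishes $\nu_n\downarrow 0$ with $P_{X^{\nu_n}}\rightharpoonup\eta$, which is a zero-noise flow by definition. Point (ii) is now immediate: the evaluation map $e_t:\Gamma_T\to\T^d$ is continuous, so $(e_t)_\#\eta=\lim_n (e_t)_\#P_{X^{\nu_n}}=\Leb{d}$, each marginal on the right being $\Leb{d}$.

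For point (i), set $F_t(\gamma):=\bigl|\gamma(t)-\gamma(0)-\int_0^t b(s,\gamma(s))\,ds\bigr|$; the claim is $\int_{\Gamma_T}F_t\,d\eta=0$ for every $t$. The main obstacle is that $F_t$ is \emph{not} a continuous functional of $\gamma$, since $b$ is merely Borel (only in $L^q_p(T)$), so one cannot directly pass to the limit in $\int F_t\,dP_{X^{\nu_n}}$ — which would otherwise vanish, because along $X^\nu$ the drift cancels the defining integral of the SDE and leaves only $\nu_n\,\E|W_t|\to 0$. To circumvent this I would approximate $b$ by mollifications $b_k$ that are continuous and bounded on $[0,T]\times\T^d$ with $b_k\to b$ in $L^1((0,T)\times\T^d)$ (recall $L^q_p(T)\hookrightarrow L^1((0,T)\times\T^d)$, as $[0,T]$ and $\T^d$ have finite measure). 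For such $b_k$ the functional $F_t^k$, defined with $b_k$ in place of $b$, is bounded and continuous, so $\int F_t^k\,d\eta=\lim_n\int F_t^k\,dP_{X^{\nu_n}}$.

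The key step uses incompressibility to transfer the $L^1$-approximation error along the flow. On one hand,
\begin{equation*}
\int_{\Gamma_T}F_t^k\,dP_{X^\nu}=\E\Big|\nu W_t+\int_0^t (b-b_k)(s,X^\nu_s)\,ds\Big|\le \nu\,\E|W_t|+\int_0^t\|(b-b_k)(s,\cdot)\|_{L^1(\T^d)}\,ds,
\end{equation*}
where the equality is the SDE and the bound uses $(e_s)_\#P_{X^\nu}=\Leb{d}$; letting $n\to\infty$ gives $\int F_t^k\,d\eta\le\|b-b_k\|_{L^1((0,T)\times\T^d)}$. On the other hand $F_t\le F_t^k+\int_0^t|b-b_k|(s,\gamma(s))\,ds$, and integrating against $\eta$ and invoking $(e_s)_\#\eta=\Leb{d}$ from point (ii) bounds the extra term by $\|b-b_k\|_{L^1((0,T)\times\T^d)}$ as well. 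Combining the two estimates yields $\int F_t\,d\eta\le 2\|b-b_k\|_{L^1((0,T)\times\T^d)}$, and letting $k\to\infty$ gives $\int F_t\,d\eta=0$, which is (i). The heart of the argument is thus the interplay between the uniform incompressibility of the approximants and of the limit, which lets a Lagrangian error be controlled by its Eulerian counterpart and so compensates for the lack of continuity of the nonlinear drift functional.
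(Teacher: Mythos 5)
Your proposal is correct and follows essentially the same strategy as the paper: tightness of $\{P_{X^\nu}\}$ via moment bounds on increments that exploit the incompressibility $(e_s)_\#P_{X^\nu}=\Leb{d}$ and $q>2$ from \eqref{eqn_prodi_serrin}, continuity of $e_t$ for point (ii), and, for point (i), approximation of $b$ by continuous vector fields in $L^1((0,T)\times\T^d)$ with the error transferred along the flows by the incompressibility of both the approximants and the limit. The only cosmetic difference is the compactness step, where you split $X^\nu$ into drift plus noise and use a Kolmogorov--Chentsov bound on the drift (with cleaner exponent bookkeeping $a\in(q',\min(p,q)]$, $a/q'>1$) instead of the paper's combined estimate, Gagliardo seminorm, and fractional Sobolev embedding \eqref{eqn_Sobolev_embedding}.
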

\begin{remark}\label{rmk_unique_vanishing_noise}
	In view of the above proposition, if there exists a unique $\eta$ satisfying $(i)$ and $(ii)$, then $\eta$ is the unique zero-noise flow of $b$, and a subsubsequence argument shows that the whole family $\{P_{X^\nu}\}_{\nu>0}$ of laws of $X^\nu$ converges to $\eta$ in $\mathcal{P}(\Gamma_T)$ as $\nu\downarrow 0$. 
\end{remark}
Let us introduce the Gagliardo semi-norm. For $u:[0,T]\to \T^d$ Borel, we define
\begin{equation}
[u]_{W^{\a,p}}:=\Big(\int_0^T\int_0^T\frac{|u( s)-u(t)|^p}{|s-t|^{1+\a p}}dsdt\Big)^{1/p}.
\end{equation}
The fractional Sobolev spaces are then defined as
\begin{equation}
W^{\a,p}((0,T;\T^d)):=\Big\{u\in L^p((0,T);\T^d)\;:\; [u]_{W^{\a,p}}<+\infty \Big\},
\end{equation}
and form Banach spaces under the norm $\|\cdot\|_{W^{\a,p}}:=\|\cdot\|_{L^p}+[\cdot]_{W^{\a,p}}.$ 
Let us introduce the H\"older semi-norm
\begin{equation}
[u]_{C^\theta}:=\sup_{s,t\in [0,T], s\neq t} \frac{|u(t)-u(s)|}{|t-s|^\theta}. 
\end{equation}
The H\"older spaces are defined as 
\begin{equation}
C^{\theta}((0,T);\T^d):=\Big\{u\in C^0((0,T);\T^d) \;:\;[u]_{C^\theta}<+\infty\Big\},
\end{equation}
and form Banach spaces under the norm $\|\cdot\|_{C^\theta}:=\|\cdot\|_{C^0}+[\cdot]_{C^\theta}.$
The following continuous embeddings then hold
\begin{equation}\label{eqn_Sobolev_embedding} 
W^{\a,p}((0,T);\T^d)\hookrightarrow C^{\theta}((0,T);\T^d),\qquad \text{for }\;\a \geq \theta +\frac{1}{p}, \qquad\theta\geq 0, \;p\geq 1.
\end{equation}
\begin{proof}
	\textbf{Step 1.} (Compactness)	
	Let us show that family of laws $\{P_{X^\nu}\}_{\nu>0}$ is precompact in $\mathcal{P}(\Gamma_T)$. 
	By Prokhorov's theorem, it is enough to prove tightness of the family $\{P_{X^\nu}\}_{\nu>0}$. 
	By the Markov inequality, for every $R>0$, we have 
	\begin{equation*}
	(P\otimes \Leb{d})\Big([X^{x,\nu}]_{W^{\a,p}}>R\Big)\leq \frac{1}{R}\int_0^T\int_0^T\frac{\E[|X^{\nu}_t-X^{\nu}_s|^p]}{|t-s|^{1+\a p}}dsdt,
	\end{equation*}
	Now, for every $s,t\in[0,T]$, we have
	\begin{equation}
	\begin{split} 
	\E[|X^{\nu}_t-X^{\nu}_s|^p]&\leq C_p\E[\int_s^t|b(s,X^{x,\nu}_s)|ds]^p+C_p\nu^p\E[|W_t-W_s|^p]\\
	&\leq C_p|t-s|^{p/2} \| b\|^p_{L^q((s,t);L^p(\T^d))}+C_p\nu^p|t-s|^{p/2},
	\end{split}
	\end{equation}
	for some constant $C_p$ which depends only on $p$ and where for the first term, we have used the law of $X^{\nu}_s$ is the Lebesgue measure on $\T^d$, that $q>2$ by \eqref{eqn_prodi_serrin}, and the Burkholder-Davis-Gundy inequality for the second term. 
	Therefore we have
	\begin{equation} 
	(P\otimes \Leb{d})\Big([X^{x,\nu}]_{W^{\a,p}}>R\Big)\leq \frac{C_{T,p,b}}{R}\int_0^T\int_0^T|t-s|^{-1-(\a-1/2)p}dsdt,
	\end{equation} 
	where the constant $C_{T,p,b}$ depends only on $T$, $p$ and $ \| b\|_{L^q_p}.$
	This integral is finite for $\a<1/2$. Now fix the parameter $p>2$ which in view of \eqref{eqn_Sobolev_embedding} is consistent with $\theta>0$. 
By the Sobolev embedding \eqref{eqn_Sobolev_embedding} , there exists a constant $C=C_{T,p,\a,\theta,b}>0$ such that for every $R>0$, we have
	$$(P\otimes \Leb{d})\Big([X^{x,\nu}]_{C^\theta}>R+C\Big)\leq \frac{C}{R}\int_0^T\int_0^T|t-s|^{-1-(\a-1/2)p}dsdt.$$
	By Ascoli's theorem, the sets $\Big\{\gamma\in \Gamma_T:[\gamma]_{C^\theta}\leq R+C \Big\}$ are compact in $\Gamma_T$. 
	This implies tightness of the family $\{P_{X^\nu}\}_{\nu>0}$. 

\bigskip 

\textbf{Step 2.} (Incompressibility and concentration on integral curves) Let us prove $(ii)$. 
	Consider a sequence $(P_{X^{\nu_n}})_{n\in\N}$ converging narrowly to a probability measure $P$ in $\mathcal{P}(\Gamma_T)$ as $\nu_n\downarrow 0$, which exists by Step 1. Let $\phi\in C(\T^d)$ and let $t\in[0,T]$. Then the functional $\Gamma_T\ni \gamma\longmapsto \phi(\gamma(t))\in\R$ belongs to $C_b(\Gamma_T)$. Therefore, we have 
	\begin{equation}
\int_{\Gamma_T}\phi(\gamma(t))P_X(d\gamma)=\lim_{\nu_n\downarrow 0}	\int_{\Gamma_T}\phi(\gamma(t))P_{X^{\nu_n}}(d\gamma)=\int_{\T^d}\phi(x)dx. 
	\end{equation}
	As $t$ and $\phi$ were arbitrary, this proves $(ii)$.
	
	\bigskip
	Let us prove $(i)$. It suffices to show that $$\int_{\Gamma_T}\Big|\gamma(t)-\gamma(0)-\int_0^tb(s,\gamma(s))ds\Big|P(d\gamma)=0\qquad \forall t\in[0,T].$$ 
	Let $\e>0$, $c:[0,T]\times\T^d\to\R^d$ be a continuous vector field such that $\int_0^T\int_{\T^d}|c(s,x)-b(s,x)|dxds<\e$ and $t\in[0,T]$. We then have
	\begin{equation}
	\begin{split} 
&	\int_{\Gamma_T}\Big|\gamma(t)-\gamma(0)-\int_0^tb(s,\gamma(s))ds\Big|P(d\gamma)\\
	&\leq \int_{\Gamma_T} \Big|\gamma(t)-\gamma(0)-\int_0^tc(s,\gamma(s))ds\Big| P(d\gamma)+\int_{\Gamma_T} \Big|\int_0^t c(s, \gamma(s))-b(s,\gamma(s))|P(d\gamma)\\
	&\leq \limsup_{n\rightarrow+\infty}\int_{\Gamma_T} \Big|\gamma(t)-\gamma(0)-\int_0^tc(s,\gamma(s))ds\Big| P^{\nu_n}(d\gamma)+\int_{\Gamma_T} \Big|\int_0^t c(s, \gamma(s))-b(s,\gamma(s))\Big|P(d\gamma)\\
	&\leq \limsup_{n\rightarrow+\infty}\E\Big[ \Big|\nu_n W_t+\int_0^t(b(s,X^{\nu_n}_s)-c(s,X^{\nu_n}_s))ds\Big|\Big] + \int_0^t\int_{\T^d}| c(s, x)-b(s,x)|dxds\\
	&\leq\limsup_{n\rightarrow+\infty} \nu_n\E[\sup_{t\in[0,T]}|W_t|]+2\int_{\T^d} \int_0^t| c(s, x)-b(s,x)|dxds<2\e. 
	\end{split} 
	\end{equation}
	
As $\e$ was arbitrary, we have thus shown that any zero-noise flow must satisfy $(i)$ and $(ii)$. Therefore, if there exists a unique probability measure $\eta$ satisfying $(i)$ and $(ii)$, a subsubsequence argument shows that the whole family of laws $\{P_{X^\nu}\}_{\nu>0}$ converges narrowly to $\eta$ as $\nu \downarrow 0$. This proves the thesis. 
\end{proof}

\subsection{Uniqueness of the zero-noise flow} 
We will now prove uniqueness of the zero-noise flow. 
The following theorem can be extracted from Ambrosio \cite{AmbBV}. 
\begin{theorem}\label{thm_ambrosio}
	Consider a divergence-free vector field $b:[0,T]\times\T^d\to \R^d$. Assume that $b\in L^1((0,T);BV(\T^d;\R^d))$. Then there exists a unique probability measure $\eta$ in $\mathcal{P}(\Gamma_T)$ such that
	\begin{enumerate} 
	\item	 $\eta$ is concentrated on integral curves of $b$, i.e.
	\begin{equation}
\int_{\Gamma_T} \Big| \gamma(t)-\gamma(0)-\int_0^t b(s,\gamma(s))ds\Big|\eta(d\gamma)=0 \qquad\forall t\in[0,T];
	\end{equation}
	
	\item for every $t\in[0,T]$, we have $(e_t)_\#\eta=\Leb{d}$.
\end{enumerate}
\bigskip 

Furthermore, there exists a Borel family $\{\gamma_{y}\}$ of $\Gamma_T$ such that for every disintegration $\{\eta_{0,y}\}$ of $\eta$ with respect to $e_0$ and $\Leb{d}$, we have $\eta_{0,y}=\delta_{\gamma_{y}}$ for $\Leb{d}$-a.e. $y\in\T^d$. 
\end{theorem}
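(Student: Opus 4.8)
The plan is to reconstruct the proof of this classical statement of Ambrosio \cite{AmbBV}, which rests on two pillars of the DiPerna--Lions--Ambrosio theory \cite{DPL89,AmbBV}: the superposition principle (for existence) and the well-posedness of the continuity equation in $L^\infty$ together with the renormalisation property (for the uniqueness and the Dirac disintegration).

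\textbf{Existence.} Since $b$ is divergence-free, the constant density $\rho\equiv 1$ solves $\partial_t\rho+\div(b\rho)=0$ on $[0,T]\times\T^d$ in the distributional sense. Because $BV(\T^d)\hookrightarrow L^1(\T^d)$, the hypothesis $b\in L^1((0,T);BV(\T^d;\R^d))$ gives $\int_0^T\!\int_{\T^d}|b|\,dx\,dt<+\infty$, so the velocity is integrable against the space-time measure $dt\otimes\Leb{d}$. The superposition principle then produces a measure $\eta\in\mathcal{P}(\Gamma_T)$ concentrated on integral curves of $b$ with $(e_t)_\#\eta=\Leb{d}$ for every $t$, which is exactly $(i)$--$(ii)$.

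\textbf{Reduction to a Dirac disintegration.} For the uniqueness part it suffices to show that \emph{every} $\eta$ satisfying $(i)$--$(ii)$ has a Dirac disintegration with respect to $e_0$: indeed $(i)$--$(ii)$ are preserved under convex combinations, and if $\tfrac12(\eta^1+\eta^2)$ is Dirac-disintegrated then so are $\eta^1,\eta^2$, with the same atoms. Given such an $\eta$ and a Borel $A\subset\T^d$, set $\mu_t^A:=(e_t)_\#\big(\mathbf{1}_A(\gamma(0))\,\eta\big)$; differentiating the integral representation of $\int\phi\,d\mu_t^A$ and using that $\eta$ lives on integral curves shows that $\mu_t^A$ is a distributional solution of the continuity equation with initial datum $\mathbf{1}_A\Leb{d}$ and density bounded by $1$, since $\mu_t^A\le(e_t)_\#\eta=\Leb{d}$.

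\textbf{The renormalisation input (the crux).} For divergence-free $L^1((0,T);BV)$ fields the renormalisation theorem yields uniqueness of bounded solutions and the persistence of characteristic data: if the initial density is $\mathbf{1}_E$ then the unique bounded solution is $\mathbf{1}_{E_t}$ for a Borel set $E_t$. (Indeed $\beta(\rho)$ is again a solution, and choosing $\beta$ with $\beta(0)=0,\beta(1)=1$ it has the same datum $\mathbf{1}_E$, so $\beta(\rho)=\rho$ for all such $\beta$, forcing $\rho\in\{0,1\}$ a.e.) The class is invariant under the time reversal $b\mapsto-b(s-\cdot,\cdot)$, so the same holds backward in time. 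Now let $\{\sigma_x^s\}$ disintegrate the two-time marginal $(e_0,e_s)_\#\eta$ over the first variable. Applying persistence to the backward equation with datum $\mathbf{1}_B$ gives $(e_0)_\#\big(\mathbf{1}_B(\gamma(s))\,\eta\big)=\mathbf{1}_{B'}\Leb{d}$ for a Borel set $B'$, that is $\int_A\sigma_x^s(B)\,dx=|A\cap B'|$ for every Borel $A$; hence $\sigma_x^s(B)\in\{0,1\}$ for $\Leb{d}$-a.e.\ $x$. Vanishing of the variance $\sigma_x^s(B)-\sigma_x^s(B)^2$ for $B$ ranging over a countable generating algebra then forces $\sigma_x^s=\delta_{\Phi_s(x)}$ for a.e.\ $x$, where $\Phi_s$ is determined by the unique solution operator of the continuity equation.

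\textbf{Conclusion.} Fix a countable dense set $\{s_j\}\subset[0,T]$. For $\Leb{d}$-a.e.\ $x$, $\eta$-almost every curve $\gamma$ with $\gamma(0)=x$ satisfies $\gamma(s_j)=\Phi_{s_j}(x)$ for all $j$ simultaneously; by continuity all such curves coincide, so $\eta_{0,x}=\delta_{\gamma_x}$ with $\gamma_x$ the continuous extension of $s_j\mapsto\Phi_{s_j}(x)$. Since each $\Phi_s$, and therefore $\gamma_x$, is fixed by the unique bounded solution and does not depend on $\eta$, the measure $\eta=\int_{\T^d}\delta_{\gamma_x}\,dx$ is uniquely determined, and $\{\gamma_x\}$ is the asserted Borel family. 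The main obstacle is the renormalisation step: the superposition principle and the disintegration manipulations are soft, whereas the persistence of $\{0,1\}$-valued solutions rests on Ambrosio's commutator estimate for $BV$ vector fields.
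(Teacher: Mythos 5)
The paper does not actually prove this theorem: it is imported verbatim as a known result, with the proof delegated to the citation of Ambrosio \cite{AmbBV} (and the paper only proves the easy corollaries built on top of it). Your reconstruction — superposition principle for existence, Ambrosio's $BV$ renormalisation giving uniqueness and $\{0,1\}$-persistence for bounded solutions of the continuity equation, the backward-in-time disintegration argument forcing Dirac conditionals, and the density-of-times argument to identify a single curve — is precisely the argument the citation points to, and it is correct apart from routine details left implicit (e.g.\ the Borel selection of $x\mapsto\gamma_x$ from the Borel disintegration, and the compatibility $(e_{s_j})_\#\eta_{0,x}=\sigma^{s_j}_x$ a.e., both standard).
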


In order to relate uniqueness of the zero-noise flow in our setting to \Cref{thm_ambrosio}, let us first introduce $\tau\in (0,T)$ and define the maps
{
\begin{equation} \label{eqn_def_S}
 S^\tau:\Gamma_T\ni \gamma(\cdot)\longmapsto \gamma(\t\vee\cdot)\in\Gamma_T
 \quad\text{ and } \quad B^\tau:\Gamma_T\ni \gamma(\cdot)\longmapsto \gamma(T-\t\wedge\cdot)\in\Gamma_T.
 \end{equation} 
Notice that  (P) of \Cref{subsec_disintegration} is satisfies with $g=S^\tau$ and $f=h=e_T$ or with $g=B^\t$ and $f=h=e_0$. }
 Define also the vector field 
\begin{equation}\label{eqn_def_b^t}
b^\tau(t,x):=\left\{\begin{split} b(t,x)\qquad &\text{if $\t>t$},\\
0\qquad &\text{if $t\leq \t$.}\end{split}\right. 
\end{equation}
We now record the following elementary fact. 
\begin{lemma}\label{lem_weak_conv} 
	Let $\mu$ be a probability measure in $\mathcal{P}(\Gamma_T)$. Then $(S^\tau)_\#\mu$ converges narrowly to $\mu$ as $\tau\downarrow 0$ and $(B^\tau)_\#\mu$ converges narrowly to $\mu$ as $\tau\downarrow 0$.
\end{lemma}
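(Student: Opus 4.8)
The plan is to deduce the narrow convergence of the pushforwards from the elementary fact that the maps $S^\tau$ and $B^\tau$ converge pointwise to the identity map on $\Gamma_T$, and then to upgrade this to convergence of integrals against bounded continuous test functions via dominated convergence.

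First I would fix $\gamma\in\Gamma_T$ and estimate the uniform distance between $S^\tau\gamma$ and $\gamma$. Since $\tau\vee t=t$ whenever $t\geq\tau$, the two paths agree on $[\tau,T]$, so
\[
\sup_{t\in[0,T]}|S^\tau\gamma(t)-\gamma(t)|=\sup_{t\in[0,\tau]}|\gamma(\tau)-\gamma(t)|,
\]
and this is bounded by the modulus of continuity of $\gamma$ on the compact interval $[0,T]$, hence tends to $0$ as $\tau\downarrow 0$. The same computation carried out at the terminal time gives $\sup_{t\in[0,T]}|B^\tau\gamma(t)-\gamma(t)|\to 0$. Thus $S^\tau\gamma\to\gamma$ and $B^\tau\gamma\to\gamma$ in $\Gamma_T$ for every fixed $\gamma$; that is, $S^\tau$ and $B^\tau$ converge pointwise to the identity.

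Next, for any $F\in C_b(\Gamma_T)$, I would write
\[
\int_{\Gamma_T}F\,d\big((S^\tau)_\#\mu\big)=\int_{\Gamma_T}F(S^\tau\gamma)\,d\mu(\gamma).
\]
By the previous step and the continuity of $F$, the integrand $F(S^\tau\gamma)$ converges pointwise to $F(\gamma)$; since it is dominated by the constant $\|F\|_{\infty}$, which is $\mu$-integrable because $\mu$ is a probability measure, the dominated convergence theorem yields convergence of the integrals to $\int_{\Gamma_T}F\,d\mu$. The identical argument, using the estimate for $B^\tau$, handles the second assertion.

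To conclude, since $\Gamma_T=C([0,T];\T^d)$ is a Polish space, narrow convergence is metrizable, and it suffices to verify the convergence along arbitrary sequences $\tau_n\downarrow 0$; the computation above applies verbatim to each such sequence and each $F\in C_b(\Gamma_T)$, which is precisely narrow convergence of $(S^\tau)_\#\mu$ and of $(B^\tau)_\#\mu$ to $\mu$. No genuine obstacle arises; the only point deserving a little care is this passage from the continuous parameter $\tau$ to sequences, needed to apply dominated convergence legitimately.
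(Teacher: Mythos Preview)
Your proof is correct and follows essentially the same approach as the paper: pointwise convergence $S^\tau\gamma\to\gamma$ (and $B^\tau\gamma\to\gamma$) in $\Gamma_T$, continuity of the test function, and dominated convergence with the bound $\|F\|_\infty$. The only difference is that you spell out the modulus-of-continuity estimate and the passage to sequences, while the paper leaves these implicit.
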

\begin{proof}
	Let $\Phi\in C_b(\Gamma_T)$. For every $\gamma\in \Gamma_T$, we have $\lim_{\tau\downarrow 0}S^\tau\gamma=\gamma$, which by continuity implies $\lim_{\tau\downarrow 0}\Phi(S^\tau\gamma)=\Phi(\gamma)$. We also clearly have 
	\begin{equation}
	\int_{\Gamma_T} |\Phi(S^\tau\gamma)|\mu(d \gamma)\leq \|\Phi\|_{C^0}. 
	\end{equation}
	Therefore, by dominated convergence, it holds 
	\begin{equation}
	\lim_{\tau\downarrow 0}\int_{\Gamma_T}\Phi(\gamma) (S^\t)_\#\mu(d\gamma)=\lim_{\tau\downarrow 0}\int_{\Gamma_T}\Phi(S^\tau\gamma)\mu(d\gamma)=\int_{\Gamma_T}\Phi(\gamma)\mu(d\gamma). 
	\end{equation}
	The same argument shows that $(B^\t)_\#\mu$ converges narrowly to $\mu$ as $\t\downarrow0$. 
\end{proof}
Let us record also the following consequence of Theorem \ref{thm_ambrosio}. 
\begin{corollary}
	Consider a divergence-free vector field $b:[0,T]\times\T^d\to \R^d$. Assume that $b\in L_{loc}^1([0,T);BV(\T^d;\R^d))\cap L^1((0,T)\times\T^d;\R^d)$. Then there exists a unique probability measure $\eta$ in $\mathcal{P}(\Gamma_T)$ such that
	\begin{enumerate} 
		\item	 $\eta$ is concentrated on integral curves of $b$, i.e.
		\begin{equation}
		\int_{\Gamma_T} \Big| \gamma(t)-\gamma(0)-\int_0^t b(s,\gamma(s))ds\Big|\eta(d\gamma)=0 \qquad\forall t\in[0,T];
		\end{equation}
		
		\item for every $t\in[0,T]$, we have $(e_t)_\#\eta=\Leb{d}$.
	\end{enumerate}
	\bigskip 
	
	Furthermore, there exists a Borel family $\{\gamma_{y}\}$ of $\Gamma_T$ such that for every disintegration $\{\eta_{0,y}\}$ of $\eta$ with respect to $e_0$ and $\Leb{d}$, we have $\eta_{0,y}=\delta_{\gamma_{y}}$ for $\Leb{d}$-a.e. $y\in\T^d$. 
\end{corollary}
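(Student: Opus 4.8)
The plan is to reduce to \Cref{thm_ambrosio} by freezing curves near the singular time $T$. For $\tau\in(0,T)$, introduce the truncated field $b_\tau$ equal to $b$ on $(0,T-\tau)\times\T^d$ and equal to $0$ on $(T-\tau,T)\times\T^d$; since $b\in L^1_{loc}([0,T);BV(\T^d;\R^d))$, the field $b_\tau$ is divergence-free and belongs to $L^1((0,T);BV(\T^d;\R^d))$, so \Cref{thm_ambrosio} furnishes a unique probability measure $\eta_\tau$ satisfying $(i)$ and $(ii)$ for $b_\tau$, together with a Borel family $\{\gamma^\tau_y\}$ with $(\eta_\tau)_{0,y}=\delta_{\gamma^\tau_y}$ for $\Leb{d}$-a.e. $y$. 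The map $B^\tau$ of \eqref{eqn_def_S} is tailored to this truncation: if $\gamma$ is an integral curve of $b$, then $B^\tau\gamma$ follows $b$ on $[0,T-\tau]$ and is constant thereafter, hence is an integral curve of $b_\tau$; and since $e_t\circ B^\tau$ equals $e_t$ for $t\le T-\tau$ and $e_{T-\tau}$ for $t\ge T-\tau$, the map $B^\tau$ preserves the incompressibility condition $(ii)$.

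For uniqueness I would argue as follows. If $\eta$ is any probability measure satisfying $(i)$ and $(ii)$ for $b$, the two observations above show that $(B^\tau)_\#\eta$ satisfies $(i)$ and $(ii)$ for $b_\tau$; by the uniqueness in \Cref{thm_ambrosio} we conclude $(B^\tau)_\#\eta=\eta_\tau$, a measure that does not depend on $\eta$. Letting $\tau\downarrow 0$ and invoking the narrow convergence $(B^\tau)_\#\eta\to\eta$ from \Cref{lem_weak_conv}, any two such $\eta$ must coincide with $\lim_{\tau\downarrow0}\eta_\tau$, which gives uniqueness.

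For existence I would run the limit $\tau\downarrow 0$ directly on the family $\{\eta_\tau\}$. The same reasoning applied to a pair $\tau'<\tau$ yields the consistency relation $(B^\tau)_\#\eta_{\tau'}=\eta_\tau$. Tightness of $\{\eta_\tau\}$ follows from the uniform bound $\int_{\Gamma_T}\int_0^T|b_\tau(s,\gamma(s))|\,ds\,\eta_\tau(d\gamma)=\|b_\tau\|_{L^1((0,T)\times\T^d)}\le\|b\|_{L^1((0,T)\times\T^d)}$, obtained from $(ii)$, which forces an equi-absolutely-continuous modulus of continuity of the curves and hence tightness in $\Gamma_T$; Prokhorov then provides a narrow limit $\eta$ along some $\tau\downarrow 0$. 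That $\eta$ satisfies $(ii)$ is checked by testing against $\gamma\mapsto\phi(e_t(\gamma))$ as in Step 2 of \Cref{prop_vanishing_noise}; that $\eta$ satisfies $(i)$ follows by the same continuous-approximation argument as in that step, now using $b_\tau\to b$ in $L^1((0,T)\times\T^d)$. Continuity of $B^\tau$ together with the consistency relation also give $(B^\tau)_\#\eta=\eta_\tau$ for every $\tau$.

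Finally, for the flow-map representation, the consistency relation shows that $\gamma^\tau_y$ and $\gamma^{\tau'}_y$ agree on $[0,T-\tau]$ for a.e. $y$ whenever $\tau'<\tau$, so these curves glue to a single Borel curve $\gamma_y$ on $[0,T)$, which extends continuously to $T$ since $\int_0^T|b(s,\gamma_y(s))|\,ds<\infty$ for a.e. $y$ by Fubini and $(ii)$. Given any disintegration $\{\eta_{0,y}\}$ of $\eta$ with respect to $e_0$ and $\Leb{d}$, \Cref{lem_disintegration_push_forward}, applicable because condition (P) of \Cref{subsec_disintegration} holds for $g=B^\tau$ and $f=h=e_0$, shows that $\{(B^\tau)_\#\eta_{0,y}\}$ disintegrates $(B^\tau)_\#\eta=\eta_\tau$; by essential uniqueness of the disintegration and \Cref{thm_ambrosio} this forces $(B^\tau)_\#\eta_{0,y}=\delta_{\gamma^\tau_y}$, so $\eta_{0,y}$ is concentrated on curves coinciding with $\gamma_y$ on $[0,T-\tau]$, and letting $\tau\downarrow0$ yields $\eta_{0,y}=\delta_{\gamma_y}$ for $\Leb{d}$-a.e. $y$. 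The main obstacle I anticipate is the existence step: securing tightness from the merely $L^1$ space-time bound and, above all, passing the integral-curve constraint $(i)$ to the limit across the singularity at $T$, where the continuous-approximation argument must be carried out with care.
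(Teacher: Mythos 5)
Your uniqueness argument and your treatment of the flow-map representation are essentially the paper's own proof: the paper likewise pushes forward by its freezing map $B^{1/k}$, observes that $(B^{1/k})_\#\eta$ satisfies $(i)$ and $(ii)$ of \Cref{thm_ambrosio} for the truncated (hence $L^1$-in-time $BV$) field, invokes \Cref{lem_disintegration_push_forward} with $g=B^{1/k}$, $f=h=e_0$ together with essential uniqueness of disintegrations, and concludes by the narrow convergence of \Cref{lem_weak_conv}. Your explicit uniqueness step, namely $(B^\tau)_\#\eta=\eta_\tau$ for every $\tau$ followed by $\tau\downarrow 0$, is exactly the intended mechanism (it is the same argument, with $S^\tau$ in place of $B^\tau$, that the paper writes out in \Cref{prop_unique_incompressible}).

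The genuine divergence, and the gap, is in existence. The paper does not build $\eta$ as a limit of the truncated measures $\eta_\tau$: it obtains existence in one line from Smirnov's representation of normal $1$-currents \cite{Smirnov94} or Ambrosio's superposition principle \cite{ambrosiocrippaedi}, applied directly to the divergence-free field $b\in L^1((0,T)\times\T^d;\R^d)$. Your compactness construction could in principle replace this, but the tightness step as you state it fails: the bound $\sup_\tau\int_{\Gamma_T}\int_0^T|\dot\gamma(s)|\,ds\,\eta_\tau(d\gamma)\le\|b\|_{L^1((0,T)\times\T^d)}$ controls only the \emph{expected length} of the curves and does not force any common modulus of continuity, equi-absolute or otherwise. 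Indeed, curves of length $1/2$ traversed in time $1/n$ have uniformly bounded length, but the corresponding Dirac masses form a non-tight family in $\mathcal{P}(\Gamma_T)$, since bounded-variation balls are not equicontinuous and hence not relatively compact in $\Gamma_T$; this is precisely why \Cref{rmk_tightness} works with $\int_0^T|\dot\gamma(s)|^2\,ds$, which via Cauchy--Schwarz yields a H\"older-$\tfrac12$ bound, while an $L^1$ velocity bound yields nothing of the sort. The repair uses what incompressibility actually provides: under $\eta_\tau$ the law of $(s,\gamma(s))$ is $\tfrac1T\Leb{1}\otimes\Leb{d}$, so by de la Vall\'ee Poussin there is a convex superlinear $G$ with $\sup_\tau\int_{\Gamma_T}\int_0^T G(|\dot\gamma(s)|)\,ds\,\eta_\tau(d\gamma)\le\int_0^T\int_{\T^d}G(|b(s,x)|)\,dx\,ds<+\infty$, and the sets $K_N:=\{\gamma\ \text{absolutely continuous}\,:\,\int_0^T G(|\dot\gamma(s)|)\,ds\le N\}$ \emph{are} compact in $\Gamma_T$ (equi-integrability of the derivatives gives equicontinuity, hence Ascoli applies, and lower semicontinuity gives closedness), so Markov's inequality yields tightness. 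With that repair your passage of $(i)$ and $(ii)$ to the limit (via continuous approximation of $b$ and $\|b_\tau-b\|_{L^1}\to 0$, as in Step 2 of \Cref{prop_vanishing_noise}) is sound and the rest of your proof goes through; alternatively, one can simply quote the superposition principle, as the paper does.
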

\begin{proof}
	The existence of a probability measure $\eta$ in $\mathcal{P}(\Gamma_T)$ satisfying $(i)$ and $(ii)$ follows from Smirnov's representation for normal 1-currents \cite{Smirnov94} or Ambrosio's superposition principle \cite{ambrosiocrippaedi}. 
	
	Let us now prove the last part of the statement. Let $\{\eta_{0,y}\}$ be a disintegration of $\eta$ with respect to $e_0$ and $\Leb{d}$.
	By \Cref{lem_disintegration_push_forward}, for every $k\in\N$, the family $\{(B^{1/k})_\#\eta_{0,y}\}$ is a disintegration of $(B^{1/k})_\#\eta$ with respect to $e_0$ and $\Leb{d}$. 
	One can check directly that $(B^{1/k})_\#\eta$ satisfies $(i)$ and $(ii)$ of Theorem \ref{thm_ambrosio}. Therefore, there exists a Borel family $\{\gamma_y^{1/k}\}$ in $\Gamma_T$ and a set $N_k$ of vanishing Lebesgue measure such that for every $y\in \T^d-N_k$, we have $\delta_{\gamma_{y}^{1/k}}=(B^{1/k})_\#\eta_{0,y}$. Define $$N:=\bigcup_{k\in\N} N_k,$$ which is of vanishing Lebesgue measure. By Lemma \ref{lem_weak_conv}, we have that $(B^{1/k})_\#\eta_{0,y}$ converges narrowly to $\eta_{0,y}$ as $k\to+\infty$ for every $y\in Y$. So there exists a Borel family $\{\gamma_y\}$ in $\Gamma_T$ such that for every $y\in \T^d-N$, the probability measure $\delta_{\gamma_{y}^{1/k}}$ converges narrowly to $\delta_{\gamma_y}$ as $k\to+\infty$ and $\eta_{0,y}=\delta_{\gamma_y}$. The thesis follows. 
\end{proof}

As a consequence, we have the following lemma. 
\begin{lemma}\label{prop_unique_incompressible}
	Consider a divergence-free vector field $b:[0,T]\times\T^d\to \R^d$ and $p,q\in (1,+\infty]$ satisfying \eqref{eqn_prodi_serrin}. Assume that $b\in L^1_{loc}((0,T];BV(\T^d;\R^d))\cap L^q_p(T)$. 
	Then there exists a unique $\eta$ in $\mathcal{P}(\Gamma_T)$ such that
	\begin{enumerate} 
		\item	$\eta$ is concentrated on integral curves of $b$, i.e. 
		\begin{equation}
		\int_{\Gamma_T}\Big|\gamma(t)-\gamma(0)-\int_0^tb(s,\gamma(s))ds\Big|\eta(d\gamma) \qquad\forall t\in[0,T];
		\end{equation}
		\item for every $t\in[0,T]$, we have $(e_t)_\#\eta=\Leb{d}$. 
	\end{enumerate} 
\end{lemma}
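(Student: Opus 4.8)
The plan is to obtain existence for free from \Cref{prop_vanishing_noise} and to reduce the uniqueness assertion to \Cref{thm_ambrosio} by cutting off the drift near the singular initial time. For existence, note that $b$ is divergence-free and lies in $L^q_p(T)$ with $p,q$ as in \eqref{eqn_prodi_serrin}, so \Cref{prop_vanishing_noise} produces a zero-noise flow $\eta$, and this $\eta$ already satisfies $(i)$ and $(ii)$. Thus the substance of the statement is uniqueness, and the obstruction is that $b$ itself need not lie in $L^1((0,T);BV(\T^d;\R^d))$---it is only locally integrable away from $t=0$---so \Cref{thm_ambrosio} cannot be invoked for $b$ directly.

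To overcome this I would fix $\tau\in(0,T)$ and pass to the truncated field $b^\tau$ from \eqref{eqn_def_b^t}, which coincides with $b$ after time $\tau$ and vanishes before it. Because $b\in L^1_{loc}((0,T];BV(\T^d;\R^d))$, the field $b^\tau$ belongs to $L^1((0,T);BV(\T^d;\R^d))$ and is still divergence-free, so \Cref{thm_ambrosio} supplies a \emph{unique} probability measure concentrated on integral curves of $b^\tau$ whose time marginals are all $\Leb{d}$. The central step is to show that $(S^\tau)_\#\eta$ is this measure for any $\eta$ obeying $(i)$ and $(ii)$ for $b$, where $S^\tau$ (from \eqref{eqn_def_S}) freezes a path at the value $\gamma(\tau)$ on $[0,\tau]$. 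The incompressibility of $(S^\tau)_\#\eta$ is immediate from $e_t\circ S^\tau=e_{t\vee\tau}$ together with $(ii)$ for $\eta$. For the concentration on integral curves of $b^\tau$ I would split at $\tau$: on $[0,\tau]$ the path $S^\tau\gamma$ is constant and $b^\tau$ vanishes, so the integral defect is identically zero, while for $t>\tau$ subtracting the integral identity for $\gamma$ at times $\tau$ and $t$ gives $\gamma(t)-\gamma(\tau)=\int_\tau^t b(s,\gamma(s))\,ds=\int_0^t b^\tau(s,S^\tau\gamma(s))\,ds$, so the defect vanishes for $\eta$-a.e.\ $\gamma$ by $(i)$; integrating against $\eta$ then shows that the defect of $(S^\tau)_\#\eta$ is zero at every $t$.

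With this in hand, \Cref{thm_ambrosio} forces $(S^\tau)_\#\eta$ to be the same measure regardless of which admissible $\eta$ we started from; hence if $\eta_1,\eta_2$ both satisfy $(i)$ and $(ii)$ then $(S^\tau)_\#\eta_1=(S^\tau)_\#\eta_2$ for every $\tau\in(0,T)$. Letting $\tau\downarrow0$ and using \Cref{lem_weak_conv}, which gives $(S^\tau)_\#\eta_j\to\eta_j$ narrowly, uniqueness of narrow limits yields $\eta_1=\eta_2$. I expect the only genuinely delicate point to be the integral-curve verification for $b^\tau$: one must check that the additive splitting of the integral identity across the cutoff is legitimate, i.e.\ that $\eta$-a.e.\ $\gamma$ satisfies the identity simultaneously at $\tau$ and at the running time $t$ (which follows by continuity in $t$ from $(i)$), and that $S^\tau$ leaves $\gamma(\tau)$ fixed so that $S^\tau\gamma$ genuinely parametrises an integral curve of $b^\tau$. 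The remaining manipulations are routine bookkeeping with push-forwards and the narrow convergence of \Cref{lem_weak_conv}.
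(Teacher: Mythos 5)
Your proof is correct and takes essentially the same route as the paper: existence from \Cref{prop_vanishing_noise}, and uniqueness by pushing any admissible $\eta$ forward under $S^\tau$, checking that $(S^\tau)_\#\eta$ satisfies $(i)$ and $(ii)$ for the truncated field $b^\tau\in L^1((0,T);BV(\T^d;\R^d))$, invoking \Cref{thm_ambrosio} to determine it uniquely, and letting $\tau\downarrow 0$ via \Cref{lem_weak_conv}. The only difference is that you spell out the verification that $(S^\tau)_\#\eta$ is concentrated on integral curves of $b^\tau$ (splitting the integral identity at $\tau$), which the paper asserts without detail.
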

\begin{proof} 
\textbf{Step 1.} (Existence)
This follows directly from Proposition \ref{prop_vanishing_noise}.

\bigskip 
\textbf{Step 2.} (Uniqueness)
Let $\eta$ be a probability measure in $\mathcal{P}(\Gamma_T)$ satisfying $(i)$ and $(ii)$. Let $\t>0$ and recall definitions \eqref{eqn_def_S} and \eqref{eqn_def_b^t}. 
The probability measure $(S^\t)_\#\eta$ satisfies $(i)$ and $(ii)$ with $b$ replaced by $b^\tau$. Observe that $b^\t$ belongs to $L^1((0,T);BV(\T^d;\R^d))$ and is divergence-free, so in view of Theorem \ref{thm_ambrosio}, the probability measure $(S^\t)_\#\eta$ is uniquely determined. By \Cref{lem_weak_conv}, we have that $(S^\tau)_\#\eta$ converges narrowly to $\eta$ as $\t\downarrow 0$. Thus $\eta$ is uniquely determined. 
\end{proof} 

We can now prove uniqueness of the zero-noise flow and $(i)$ of Theorem \ref{thm_main}. 

\begin{proof} [Proof of $(i)$ of Theorem \ref{thm_main}]
In view of  \Cref{prop_unique_incompressible}, there exists a unique probability measure $\eta$ satisfying $(i)$ concentration on integral curves of $b$ and $(ii)$ incompressibility. In view of \Cref{rmk_unique_vanishing_noise}, $\eta$ is the unique zero-noise flow of $b$. 

Let us now prove $(i)$. 
Let $a$ be a Borel vector field such that $a=b$ $\Leb{d+1}$-a.e.. Then there exists a zero-noise flow $ \bar{\eta}$ of $a$ by \Cref{prop_vanishing_noise}, which is concentrated on integral curves of $a$ and satisfies $(e_t)_\#\bar\eta=\Leb{d}$ for every $t\in[0,T]$. Note that for every $t\in[0,T]$, we have 
\begin{equation}
\int_{\Gamma_T}\Big| \gamma(t)-\gamma(0)-\int_0^t b(s,\gamma(s))ds\Big|\bar\eta(d\gamma)=\int_{\Gamma_T}\Big| \gamma(t)-\gamma(0)-\int_0^t a(s,\gamma(s))ds\Big|\bar\eta(d\gamma).
\end{equation}
Thus $\bar\eta$ is concentrated on integral curves of $b$ and by \Cref{prop_unique_incompressible}, this implies $\eta=\bar\eta$, and proves the thesis. 
\end{proof} 
\bigskip 

\section{Stochasticity of the zero-noise flow} \label{sec_stoch}
In this section we prove part $(ii)$ of Theorem \ref{thm_main}. Throughout this section, we shall take $\eta$ to be the unique zero-noise flow of $b$ satisfying the assumptions of \Cref{thm_main}. Let us gather the following remark, which insures that we can perform disintegrations on $\eta$ (see Section \ref{subsec_disintegration}). 
\begin{remark} \label{rmk_tightness}
	Consider a Borel vector field $b\in L^q_p(T)$ with $p$ and $q$ satisfying the Prodi-Serrin condition \eqref{eqn_prodi_serrin}, and a probability measure $\eta$ in $\mathcal{P}(\Gamma_T)$ concentrated on integral curves of $b$ such that $(e_s)_\#\eta=\Leb{d}$ for every $s\in[0,T]$. Then $\eta$ is tight. 
	Indeed, define the following subsets of $\Gamma_T$
	\begin{equation}
	K_N:=\Big\{ \gamma\in\Gamma_T : \int_0^T |\dot{\gamma}(s)|^2ds\leq N\Big\}. 
	\end{equation}
	By Ascoli's theorem, $K_N$ is compact in $\Gamma_T$. This then implies 
	\begin{equation}
	\int_{\Gamma_T}\int_0^T|\dot{\gamma}(s)|^2ds\eta(d\gamma)\leq \int_0^T\int_{\Gamma_T}|b(s,\gamma(s))|^2\eta(d\gamma)ds\leq \|b\|_{L^2((0,T)\times\T^d)}. 
	\end{equation}
	Therefore, by the Markov inequality, for every $N\in\N$, we have $$\eta(K_N^c)\leq \frac{|b\|_{L^2((0,T)\times\T^d)}}{N},$$ 
	and since $b$ is in $L^2((0,T)\times \T^d;\R^d)$, this implies tightness of $\eta$ in $\mathcal{P}(\Gamma_T)$. 
\end{remark} 
\subsection{Disintegration of $\eta$ at the final time}
 Let $\{\eta_{T,x}\}$ be a disintegration of $\eta$ with respect to $e_T$ and $\Leb{d}$. In this paragraph, we will give a representation for $\eta_{T,x}$ in terms on the backward integral curves of $b$. 
Define $\tilde b(t,x):=-b(T-t,x)$ and observe that this vector field lies in $L^1_{loc}([0,T);BV(\T^d;\R^d))$. 
By \Cref{thm_ambrosio}, there exists a unique $\tilde \eta$ in $\mathcal{P}(\Gamma_T)$ concentrated on integral curves of $\tilde b$ and such that $(e_t)_\#\tilde\eta=\Leb{d}$ for every $t\in[0,T]$. Furthermore, there exists a Borel family $\{\tilde\gamma_{y}\}$ in $\Gamma_T$ such that 
\begin{equation} \label{eqn_disint_tilde_eta} 
\tilde\eta=\int_{\T^d}\delta_{\tilde\gamma_{y}}dy.
\end{equation} 
Define the involution $$\beta:\Gamma_T\ni \gamma(\cdot)\longmapsto \gamma(T-\cdot)\in \Gamma_T.$$
Define further the Borel family in $\Gamma_T$
\begin{equation} \label{eqn_def_gamma}
\gamma_y=\beta(\tilde\gamma_y) \qquad\forall y\in \T^d. 
\end{equation} 
We then have the following lemma. 
\begin{lemma}\label{lem_disint_final}
For $\Leb{d}$-a.e. $y\in\T^d$, we have $\eta_{T,y}=\delta_{\gamma_y}$.
\end{lemma}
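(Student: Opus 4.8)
The plan is to exploit the time-reversal involution $\beta$ to transport the known disintegration of $\tilde\eta$ at the \emph{initial} time onto a disintegration of $\eta$ at the \emph{final} time. First I would record that $\beta$ intertwines integral curves of $b$ and of $\tilde b$: if $\gamma$ is an integral curve of $b$ and $\delta:=\beta(\gamma)$, then for every $t\in[0,T]$ the change of variables $s\mapsto T-s$ gives
\begin{equation}
\delta(t)-\delta(0)=\gamma(T-t)-\gamma(T)=-\int_{T-t}^T b(s,\gamma(s))\,ds=\int_0^t\tilde b(u,\delta(u))\,du,
\end{equation}
and the integrability $\int_0^T|\tilde b(u,\delta(u))|\,du=\int_0^T|b(s,\gamma(s))|\,ds<+\infty$ is preserved. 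Hence $\beta$ sends integral curves of $b$ to integral curves of $\tilde b$. Since $\eta$ is concentrated on integral curves of $b$ and is tight by \Cref{rmk_tightness}, the displayed identity shows that the defining integral $(i)$ for $\tilde b$ vanishes against $\beta_\#\eta$; moreover $e_t\circ\beta=e_{T-t}$ yields $(e_t)_\#(\beta_\#\eta)=(e_{T-t})_\#\eta=\Leb{d}$ for every $t$. By the uniqueness characterising $\tilde\eta$, this forces $\beta_\#\eta=\tilde\eta$, and since $\beta$ is an involution, $\eta=\beta_\#\tilde\eta$.

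Next I would transfer the disintegration using \Cref{lem_disintegration_push_forward} with $\mu=\tilde\eta$, $g=\beta$, $f=e_0$, $h=e_T$ and $\nu=\Leb{d}$. Property (P) holds: on one hand $h_\#\tilde\eta=(e_T)_\#\tilde\eta=\Leb{d}$, and on the other hand, for every $y\in\T^d$ one has $\beta^{-1}(e_T^{-1}(y))=e_0^{-1}(y)$, because $(e_T\circ\beta)(\omega)=\omega(0)$; taking complements gives $g^{-1}((h^{-1}(y))^c)=(f^{-1}(y))^c$. Recalling from \eqref{eqn_disint_tilde_eta} that $\{\delta_{\tilde\gamma_y}\}$ is the disintegration of $\tilde\eta$ with respect to $e_0$ and $\Leb{d}$, the lemma shows that $\{\beta_\#\delta_{\tilde\gamma_y}\}=\{\delta_{\beta(\tilde\gamma_y)}\}=\{\delta_{\gamma_y}\}$ is a disintegration of $\eta=\beta_\#\tilde\eta$ with respect to $e_T$ and $\Leb{d}$.

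Finally, by the essential uniqueness of disintegrations recalled in \Cref{subsec_disintegration}, any disintegration $\{\eta_{T,y}\}$ of $\eta$ with respect to $e_T$ and $\Leb{d}$ must agree $\Leb{d}$-a.e. with $\{\delta_{\gamma_y}\}$, which is exactly the claim. The only genuinely delicate point is the first step: verifying that $\beta$ really maps integral curves to integral curves in the precise sense of condition $(i)$ (including the integrability requirement) and that $\tilde b$ lies in the class for which the uniqueness of $\tilde\eta$ applies. Everything after that is a bookkeeping application of \Cref{lem_disintegration_push_forward} together with uniqueness of the disintegration.
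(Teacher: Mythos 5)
Your proof is correct and takes essentially the same route as the paper: both arguments identify $\eta$ with $\beta_\#\tilde\eta$ via the uniqueness characterisation, and then transfer the disintegration through the involution $\beta$ using \Cref{lem_disintegration_push_forward} together with essential uniqueness of disintegrations. The only (immaterial) differences are mirror-image ones: you verify conditions $(i)$--$(ii)$ for $\tilde b$ against $\beta_\#\eta$ by a pointwise change of variables on curves and invoke uniqueness of $\tilde\eta$, whereas the paper verifies them for $b$ against $\beta_\#\tilde\eta$ via approximation by continuous vector fields and invokes uniqueness of $\eta$, and correspondingly you push forward $\{\delta_{\tilde\gamma_y}\}$ while the paper pushes forward $\{\eta_{T,y}\}$.
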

\begin{proof}
	\textbf{Step 1.} Let us show that for every Borel set $A\subset \Gamma_T$, we have 
	\begin{equation}
	\eta(A)=\tilde\eta(\beta(A)). 
	\end{equation}
	Define the probability measure $\mu\in \mathcal{P}(\Gamma_T)$ by
		\begin{equation}\label{eqn_def_mu}
	\mu(A)=\tilde\eta(\beta(A)) 
	\end{equation}
	for every Borel set $A\subset \Gamma_T.$
	Clearly for every $t\in[0,T]$, we have $(e_t)_\#\mu=(e_{T-t})_\#\tilde\eta=\Leb{d}$. 
Let us show that $\mu$ is concentrated on integral curves of $b$, i.e. 
\begin{equation}
\int_{\Gamma_T}\Big|\gamma(t)-\gamma(0)-\int_0^t b(s,\gamma(s))ds\Big|\mu(d\gamma)=0 \qquad\forall t\in [0,T]. 
\end{equation}
Let $\e>0$, $t\in[0,T]$ and $c:[0,T]\times\T^d\to \R^d$ be a continuous vector field such that $$\int_0^T\int_{\T^d}|c(s,x)-b(s,x)|dxds<\e.$$
Then, by continuity of the functional
$$\Gamma_T\ni \gamma\longmapsto \Big|\gamma(t)-\gamma(0)+\int_0^tc(T-s,\gamma(s))ds\Big|\in\R^+,$$
we have by \eqref{eqn_def_mu} that 
\begin{equation}
\int_{\Gamma_T}\Big| \gamma(t)-\gamma(0)+\int_0^t c(T-s,\gamma(s))ds\Big|\tilde\eta(d\gamma)=\int_{\Gamma_T}\Big| \gamma(t)-\gamma(0)-\int_0^t c(s,\gamma(s))ds\Big|\mu(d\gamma). 
\end{equation}
Since $\tilde \eta$ is concentrated on integral curves of $\tilde b$, we have 
\begin{equation}
\int_{\Gamma_T}\Big|\gamma(t)-\gamma(0)+\int_0^t b(T-s,\gamma(s))ds\Big|\tilde\eta(d\gamma)=0.
\end{equation}
Thus, we have
\begin{equation}
\begin{split}
&\int_{\Gamma_T}\Big| \gamma(t)-\gamma(0)-\int_0^t b(s,\gamma(s))ds\Big|\mu(d\gamma)\\
&\leq \int_{\Gamma_T}\Big| \gamma(t)-\gamma(0)-\int_0^t c(s,\gamma(s))ds\Big|\mu(d\gamma)+\int_0^T\int_{\T^d}|c(s,x)-b(s,x)|dxds\\
&=\int_{\Gamma_T}\Big| \gamma(t)-\gamma(0)+\int_0^t c(T-s,\gamma(s))ds\Big|\tilde\eta(d\gamma)+\int_0^T\int_{\T^d}|c(s,x)-b(s,x)|dxds\\
&\leq \int_{\Gamma_T}\Big|\gamma(t)-\gamma(0)+\int_0^t b(T-s,\gamma(s))ds\Big|\tilde\eta(d\gamma)+2\int_0^T\int_{\T^d}|c(s,x)-b(s,x)|dxds<2\e.
\end{split}
\end{equation}
As $\e$ and $t$ were arbitrary, this shows that $\mu$ is concentrated on integral curves of $b$. 
In view of \Cref{thm_ambrosio}, this shows that $\mu=\eta$.

\bigskip 

\textbf{Step 2.} Let $\{\eta_{T,y}\}$ be a disintegration of $\eta$ with respect to $e_T$ and $\Leb{d}$.  In view of \Cref{lem_disintegration_push_forward} with $f=e_T$, $g=\beta$ and $h=e_0$, it holds that
$\{\beta_\#\eta_{T,y}\}$ is a disintegration of $\beta_\#\eta$ with respect to $e_0$ and $\Leb{d}$, and since $\beta$ is an involution we have $\beta_\#\eta=\tilde\eta$ by \textbf{Step 1}. Therefore by \eqref{eqn_disint_tilde_eta} and essential uniqueness of the disintegration, we have $\beta_\#\eta_{T,y}=\tilde\eta_{0,x}=\delta_{\tilde\gamma_y}$ for $\Leb{d}$-a.e. $y\in \T^d$, whence using again that $\beta$ is an involution, we have $\eta_{T,y}=\delta_{\gamma_y}$ for $\Leb{d}$-a.e. $y\in\T^d$. This proves the thesis. 
\end{proof} 

\subsection{Disintegration of $\eta$ at the initial time}\label{sec_disint}
We define the measure $\nu =(e_0,e_T)_\#\eta$ on $\T^d\times\T^d$.
For every $x\in \T^d,$ we define the family of measures on $\Gamma_T$
\begin{equation}\label{eqn_def_ddelta_x_gamma}
\delta_{x,\gamma_{y}}:=\left\{ 
\begin{split} 
\delta_{\gamma_{y}}\qquad &\text{if} \quad \gamma_{y}(0)=x,\\
0\qquad &\text{if}\quad \gamma_{y}(0)\neq x.
\end{split}
\right. 
\end{equation}
Define the projection maps $$\pi_0 :\T^d\times\T^d\ni (x,y)\longmapsto x\in \T^d,$$ and $$\pi_1 :\T^d\times\T^d\ni (x,y)\longmapsto y\in \T^d.$$ Let $\{\nu_x\}$ be a disintegration of $\nu$ with respect to $\pi_0$ and $\Leb{d}$.

We will now give an expression for disintegrations of $\eta$ with respect to $e_0$ and $\Leb{d}$ in terms of $\{\nu_x\}$. Throughout this section $\{\eta_{0,x}\}$ is a disintegration of $\eta$ with respect to $e_0$ and $\Leb{d}$. We then define $\tilde\nu_x :=(\pi_1)_\#\nu_x$ for every $x\in \T^d$. We denote by $\{\gamma_y\}$ the Borel family in $\Gamma_T$ given by \Cref{lem_disint_final}. We also define the probability measure 
\begin{equation} \label{eqn_def_nu_y}
\nu_y:=\delta_{(\gamma_{y}(0),y)},
\end{equation}
on $\T^d\times \T^d$ for every $y\in \T^d$. 
\begin{lemma} \label{lem_disint_nu}
	The family	$\{\nu_y\}$ is a disintegration of $\nu$ with respect to $\pi_1$ and $\Leb{d}.$ 
\end{lemma}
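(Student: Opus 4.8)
The plan is to verify directly the two defining properties of a disintegration listed in \Cref{subsec_disintegration}, namely that each $\nu_y$ is concentrated on the fibre $\pi_1^{-1}(y)$ and that $\nu=\int_{\T^d}\nu_y\,dy$, after first recording the compatibility condition $(\pi_1)_\#\nu=\Leb{d}$. The latter is immediate: since $\nu=(e_0,e_T)_\#\eta$, we have $(\pi_1)_\#\nu=(e_T)_\#\eta=\Leb{d}$ by property $(ii)$ of $\eta$. The concentration property is also immediate, because $\nu_y=\delta_{(\gamma_y(0),y)}$ is a Dirac mass whose base point lies in $\pi_1^{-1}(y)=\{(x,y):x\in\T^d\}$. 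Moreover $\{\nu_y\}$ is a genuine Borel family of probability measures: by \Cref{lem_disint_final} the family $\{\gamma_y\}$ is Borel, so $y\mapsto(\gamma_y(0),y)=(e_0(\gamma_y),y)$ is Borel and hence so is $y\mapsto\delta_{(\gamma_y(0),y)}$.

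The substance of the proof is the decomposition $\nu=\int_{\T^d}\nu_y\,dy$, which I would establish in integrated form by testing against an arbitrary bounded Borel function $\psi:\T^d\times\T^d\to\R$. Unfolding the definition of $\nu$ as a pushforward gives
\begin{equation}
\int_{\T^d\times\T^d}\psi\,d\nu=\int_{\Gamma_T}\psi(\gamma(0),\gamma(T))\,\eta(d\gamma).
\end{equation}
Since $\eta$ is tight by \Cref{rmk_tightness}, it admits the disintegration $\{\eta_{T,y}\}$ with respect to $e_T$ and $\Leb{d}$, so formula \eqref{eqn_int_disintegration} applied to the bounded Borel integrand $\gamma\mapsto\psi(e_0(\gamma),e_T(\gamma))$ yields
\begin{equation}
\int_{\Gamma_T}\psi(\gamma(0),\gamma(T))\,\eta(d\gamma)=\int_{\T^d}\Big[\int_{\Gamma_T}\psi(\gamma(0),\gamma(T))\,\eta_{T,y}(d\gamma)\Big]dy.
\end{equation}
By \Cref{lem_disint_final} we have $\eta_{T,y}=\delta_{\gamma_y}$ for $\Leb{d}$-a.e. $y$, so the inner integral equals $\psi(\gamma_y(0),\gamma_y(T))$ for $\Leb{d}$-a.e. $y$.

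It then remains to identify $\gamma_y(T)$ with $y$: since $\eta_{T,y}$ is concentrated on the level set $e_T^{-1}(y)$ and equals $\delta_{\gamma_y}$, the curve $\gamma_y$ satisfies $\gamma_y(T)=y$ for $\Leb{d}$-a.e. $y$. Hence the inner integral equals $\psi(\gamma_y(0),y)=\int_{\T^d\times\T^d}\psi\,d\nu_y$, and combining the displays gives
\begin{equation}
\int_{\T^d\times\T^d}\psi\,d\nu=\int_{\T^d}\Big[\int_{\T^d\times\T^d}\psi\,d\nu_y\Big]dy.
\end{equation}
Taking $\psi=\mathbbm{1}_A$ for a Borel set $A\subset\T^d\times\T^d$ yields property $(ii)$ and completes the proof. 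There is no genuine obstacle here; the argument is a chain of identifications, and the only points requiring care are the measurability of the family $\{\nu_y\}$ and the replacement $\gamma_y(T)=y$, both of which rest on \Cref{lem_disint_final} together with the concentration of $\eta_{T,y}$ on the fibre $e_T^{-1}(y)$.
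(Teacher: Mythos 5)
Your proof is correct and follows essentially the same route as the paper: both rest on \Cref{lem_disint_final} (the identification $\eta_{T,y}=\delta_{\gamma_y}$) and then push that disintegration forward under $(e_0,e_T)$ to obtain $\nu(A)=\int_{\T^d}\nu_y(A)\,dy$. Your version merely phrases the computation with bounded Borel test functions rather than directly with Borel sets, and makes explicit two points the paper leaves implicit, namely the Borel measurability of $y\mapsto\nu_y$ and the identity $\gamma_y(T)=y$ coming from the concentration of $\eta_{T,y}$ on $e_T^{-1}(y)$.
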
	
\begin{proof}
	It is clear that $\nu_y$ is supported on $\pi_1^{-1}(y)$. 
	By part $(i)$ of Theorem \ref{thm_main}, we know also that $\{ \delta_{\gamma_{y}} \}$ is a disintegration of $\eta$ with respect to $e_T$ and $\Leb{d}$. Therefore, for every Borel set $A$ in $\T^d\times \T^d$, we have $$\nu(A)=(e_0,e_T)_\#\eta(A)= \int_{\T^d} (e_0,e_T)_\#\delta_{\gamma_{y}} (A)dy=\int_{\T^d} \delta_{(\gamma_{y}(0),y)}(A)dy=\int_{\T^d}\nu_y(A)dy,$$
	which proves the thesis.
\end{proof}

\begin{lemma}\label{lem_disint_eeta_nu}
	The family $\{\delta_{x,\gamma_{y}}: x,y\in \T^d\}$ is a disintegration of $\eta$ with respect to $(e_{0},e_T)$ and $\nu$. 
\end{lemma}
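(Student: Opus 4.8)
The plan is to verify directly the two defining conditions of a disintegration (see \Cref{subsec_disintegration}) for the family $\{\delta_{x,\gamma_y}\}$ against the map $(e_0,e_T)$ and the target measure $\nu$. For condition $(i)$, I would show that $\delta_{x,\gamma_y}$ is concentrated on the fibre $(e_0,e_T)^{-1}(x,y)=\{\gamma\in\Gamma_T:\gamma(0)=x,\ \gamma(T)=y\}$. If $\gamma_y(0)\neq x$ then $\delta_{x,\gamma_y}=0$ by \eqref{eqn_def_ddelta_x_gamma} and there is nothing to check; if $\gamma_y(0)=x$ then $\delta_{x,\gamma_y}=\delta_{\gamma_y}$, and it only remains to verify $\gamma_y(T)=y$. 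This follows from the definition $\gamma_y=\beta(\tilde\gamma_y)$ in \eqref{eqn_def_gamma}, since $\gamma_y(T)=\tilde\gamma_y(0)=y$, where $\tilde\gamma_y(0)=y$ comes from \eqref{eqn_disint_tilde_eta} (that is, $\{\delta_{\tilde\gamma_y}\}$ disintegrates $\tilde\eta$ along $e_0$).

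For condition $(ii)$, namely $\eta(A)=\int_{\T^d\times\T^d}\delta_{x,\gamma_y}(A)\,d\nu(x,y)$ for every Borel $A\subset\Gamma_T$, the crux is to disintegrate $\nu$ along $\pi_1$ using \Cref{lem_disint_nu}, whose fibres $\nu_y=\delta_{(\gamma_y(0),y)}$ are Dirac masses. Applying the integration formula \eqref{eqn_int_disintegration} to this disintegration collapses the double integral to $\int_{\T^d}\delta_{\gamma_y(0),\gamma_y}(A)\,dy$. On this fibre the first index equals $\gamma_y(0)$, so by \eqref{eqn_def_ddelta_x_gamma} one has $\delta_{\gamma_y(0),\gamma_y}=\delta_{\gamma_y}$, reducing the expression to $\int_{\T^d}\delta_{\gamma_y}(A)\,dy$. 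Finally, by \Cref{lem_disint_final} the family $\{\delta_{\gamma_y}\}$ is a disintegration of $\eta$ with respect to $e_T$ and $\Leb{d}$, so $\int_{\T^d}\delta_{\gamma_y}(A)\,dy=\eta(A)$, which closes the argument.

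The computation itself is short; the main point is the organisational one of choosing the $\pi_1$-disintegration of $\nu$, which turns the two-parameter integral into a single integral over $y$ and lets \Cref{lem_disint_final} do the work. One subtlety I would address is that $\{\delta_{x,\gamma_y}\}$ must be a Borel family of probability measures $\nu$-almost everywhere: since \Cref{lem_disint_nu} shows $\nu$ is carried by the graph $\{(\gamma_y(0),y):y\in\T^d\}$, we have $x=\gamma_y(0)$ for $\nu$-a.e. $(x,y)$, and hence $\delta_{x,\gamma_y}=\delta_{\gamma_y}$ is indeed a probability measure $\nu$-a.e., with the required Borel dependence inherited from the Borel family $\{\gamma_y\}$.
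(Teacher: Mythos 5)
Your proposal is correct and follows essentially the same route as the paper: disintegrate $\nu$ along $\pi_1$ via \Cref{lem_disint_nu}, use \eqref{eqn_int_disintegration} to collapse the double integral against the Dirac fibres $\nu_y=\delta_{(\gamma_y(0),y)}$ so that $\delta_{x,\gamma_y}$ reduces to $\delta_{\gamma_y}$, and conclude with \Cref{lem_disint_final}. The only difference is presentational: you spell out the support condition $\gamma_y(T)=y$ (via $\gamma_y=\beta(\tilde\gamma_y)$) and the $\nu$-a.e.\ probability-measure subtlety, both of which the paper dismisses as ``clear.''
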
 

\begin{proof}
	It is clear that $\delta_{x,\gamma_{y}}$ is supported on $(e_0,e_T)^{-1} (x,y)$. 
	For every Borel set $A$ contained in $\Gamma_T$, we have
	\begin{equation}
	\begin{split} 
	\int_{\T^d\times\T^d}\delta_{x,\gamma_{y}}(A) d\nu(x,y)&=\int_{\T^d}\int_{\T^d\times \{y\}} \delta_{x,\gamma_{y}}(A) d\nu_ydy\\
	&=\int_{\T^d} \delta_{\gamma_{y}}(A)dy\\
	&=\eta(A),
	\end{split}
	\end{equation}
	where in the first equality we have used Lemma \ref{lem_disint_nu}, as well as \eqref{eqn_int_disintegration}. In the second equality we have used the definition \eqref{eqn_def_nu_y} of $\nu_y$ and the definition \eqref{eqn_def_ddelta_x_gamma} of $\delta_{x,\gamma_{y}}$, and in the last equality we have \Cref{lem_disint_final}. 
	This proves the claim.
\end{proof}
\begin{lemma}\label{lem_disint_nu_x}
	For $\Leb{d}$-a.e. $x\in \T^d$, we have
	\begin{equation}
	\eta_{0,x}=\int_{\T^d}\delta_{\gamma_{y}}d\tilde\nu_x(y).
	\end{equation}
\end{lemma}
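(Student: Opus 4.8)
The plan is to exploit the factorisation $e_0 = \pi_0 \circ (e_0,e_T)$ and carry out the disintegration of $\eta$ along $e_0$ in two stages, combining the disintegration of $\eta$ with respect to $(e_0,e_T)$ and $\nu$ furnished by \Cref{lem_disint_eeta_nu} with the disintegration $\{\nu_x\}$ of $\nu$ with respect to $\pi_0$ and $\Leb{d}$. Concretely, I would introduce the candidate family $\{\lambda_x\}$ defined by $\lambda_x := \int_{\T^d}\delta_{x,\gamma_{y}}\,d\tilde\nu_x(y)$, show that it is a disintegration of $\eta$ with respect to $e_0$ and $\Leb{d}$, and then invoke essential uniqueness of the disintegration (see \Cref{subsec_disintegration}) to force $\eta_{0,x}=\lambda_x$ for $\Leb{d}$-a.e.\ $x$.

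To verify that $\{\lambda_x\}$ is a disintegration, I would first check concentration: each $\delta_{x,\gamma_{y}}$ is either zero or $\delta_{\gamma_{y}}$ with $\gamma_{y}(0)=x$, hence concentrated on $e_0^{-1}(x)$, and so is $\lambda_x$. For the reconstruction identity I would fix a Borel set $A\subset\Gamma_T$ and compute, using $\tilde\nu_x=(\pi_1)_\#\nu_x$ together with the fact that $\nu_x$ is concentrated on $\{x\}\times\T^d$ (so that the fixed parameter $x$ may be replaced by the integration variable), that $\int_{\T^d}\delta_{x,\gamma_{y}}(A)\,d\tilde\nu_x(y)=\int_{\T^d\times\T^d}\delta_{x',\gamma_{y'}}(A)\,d\nu_x(x',y')$. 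Integrating in $x$ against $\Leb{d}$ and using $\nu=\int_{\T^d}\nu_x\,dx$ yields $\int_{\T^d\times\T^d}\delta_{x',\gamma_{y'}}(A)\,d\nu(x',y')$, which equals $\eta(A)$ by \Cref{lem_disint_eeta_nu}. This is precisely the reconstruction property, completing the verification that $\{\lambda_x\}$ disintegrates $\eta$ along $e_0$.

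The remaining point is to replace $\delta_{x,\gamma_{y}}$ by $\delta_{\gamma_{y}}$ inside the integral. For this I would observe that by \Cref{lem_disint_nu} the measure $\nu=\int_{\T^d}\delta_{(\gamma_{y}(0),y)}\,dy$ is concentrated on the graph $\Sigma:=\{(\gamma_{y}(0),y):y\in\T^d\}$. Since $(\pi_0)_\#\nu=\Leb{d}$ and $\{\nu_x\}$ disintegrates $\nu$ along $\pi_0$, the identity $0=\nu(\Sigma^c)=\int_{\T^d}\nu_x(\Sigma^c)\,dx$ gives $\nu_x(\Sigma^c)=0$ for $\Leb{d}$-a.e.\ $x$; combined with concentration of $\nu_x$ on $\{x\}\times\T^d$, this shows $\nu_x$ is concentrated on $\{(x,y):\gamma_{y}(0)=x\}$, and therefore $\tilde\nu_x$ is concentrated on $\{y:\gamma_{y}(0)=x\}$. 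On this set $\delta_{x,\gamma_{y}}=\delta_{\gamma_{y}}$, so $\lambda_x=\int_{\T^d}\delta_{\gamma_{y}}\,d\tilde\nu_x(y)$ for $\Leb{d}$-a.e.\ $x$, which is the claimed identity.

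I expect the main obstacle to be the bookkeeping in the Fubini-type interchange of the second paragraph: one must carefully distinguish the fixed outer parameter $x$ appearing in $\delta_{x,\gamma_{y}}$ from the integration variable $x'$ carried by $\nu_x$, and justify replacing the former by the latter using that $\nu_x$ lives on $\{x\}\times\T^d$. Everything else reduces to essential uniqueness of disintegrations and to the elementary fact that concentration of a measure on a Borel set passes to $\Leb{d}$-a.e.\ fibre of its disintegration.
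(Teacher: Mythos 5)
Your proposal is correct, and at its core it performs the same two-stage disintegration as the paper: both arguments combine the disintegration $\{\delta_{x,\gamma_{y}}\}$ of $\eta$ with respect to $(e_0,e_T)$ and $\nu$ (\Cref{lem_disint_eeta_nu}) with the disintegration $\{\nu_x\}$ of $\nu$ with respect to $\pi_0$ and $\Leb{d}$. The difference is organizational: you package the candidate family $\lambda_x=\int_{\T^d}\delta_{x,\gamma_{y}}d\tilde\nu_x(y)$, verify it is a disintegration of $\eta$ with respect to $e_0$ and $\Leb{d}$, and invoke essential uniqueness of disintegrations as a black box, whereas the paper inlines that uniqueness argument by computing $\int_B\eta_{0,x}(A)dx$ for arbitrary Borel $B\subset\T^d$ and for $A$ in a countable generating family of the Borel $\sigma$-algebra of $\Gamma_T$; your route avoids the generating-family bookkeeping, at the mild cost of having to observe that $\lambda_x$ is a probability measure for a.e.\ $x$ (which follows from your reconstruction identity applied with $A=\Gamma_T$). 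One genuine merit of your write-up is the third paragraph: the replacement of $\delta_{x,\gamma_{y}}$ by $\delta_{\gamma_{y}}$ is only valid where $\gamma_{y}(0)=x$, so one needs that $\tilde\nu_x$ is concentrated on $\{y:\gamma_{y}(0)=x\}$ for $\Leb{d}$-a.e.\ $x$; you derive this explicitly from \Cref{lem_disint_nu} (concentration of $\nu$ on the Borel set $\{(x,y):\gamma_{y}(0)=x\}$ passes to a.e.\ fibre $\nu_x$), while the paper uses exactly this fact implicitly in its final equality, justified only by ``the definition of $\delta_{x,\gamma_{y}}$''. So your proof closes a small expository gap in the paper's argument.
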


\begin{proof}
	Let $B$ be a Borel set contained in $\T^d$. As $\Gamma_T$ is separable, its Borel $\s$-algebra is generated by a countable family $\mathscr{G}$. Let $A\in \mathscr{G}$. We then have
	\begin{equation}
	\begin{split}
	\int_B \eta_{0,x}(A)dx
	&\overset{1}{=}\int_{\T^d} \eta_{0,x}(A\cap\{\gamma(0)\in B\})dx\\
	&\overset{2}{=}\eta(A\cap\{\gamma(0)\in B\})\\
	&\overset{3}{=}\int_{\T^d\times\T^d} \delta_{x,\gamma_{y}}(A\cap\{\gamma(0)\in B\})d\nu(x,y)\\
	&\overset{4}{=}\int_B\Big[ \int_{\{x\}\times\T^d} \delta_{x,\gamma_{y}}(A) d\nu_x\Big]dx \\
	&\overset{5}{=}\int_B\Big[\int_{\T^d}\delta_{\gamma_{y}}(A) d\tilde\nu_x(y)\Big]dx.
	\end{split} 
	\end{equation}
	In equality $1$, we have used that $\eta_{0,x}$ is supported on $\{\gamma(0)=x\}$ for every $x\in\T^d$. In equality $2$, we have used that $\{\eta_{0,x}\}$ is a disintegration of of $\eta$ with respect to $e_0$ and $\Leb{d}$. In equality $3$, we have used Lemma \ref{lem_disint_eeta_nu}. In equality $4$, we have used that $\{\nu_x\}$ is a disintegration of $\nu$ with respect to $\pi_0$ and $\Leb{d}$, equation \eqref{eqn_int_disintegration}, as well as the fact that $\delta_{x,\gamma_{y}}(A\cap \{\gamma(0)\in B\})=0$ if $x\notin B$ by definition. 
	In equality $5$, we have used the definition of $\delta_{x,\gamma_{y}}$ as well as the definition of $\{\tilde\nu_x\}$. 
	As $B$ was an arbitrary Borel set in $\T^d$, there exists a set $N_A$ of vanishing Lebesgue measure such that for every $x\in \T^d-N_A$, we have
	$$\eta_{0,x}(A)=\int_{\T^d}\delta_{\gamma_{y}}(A) d\tilde\nu_x(y).$$
	Now define $$N:=\bigcup_{A\in \mathscr{G}}N_A,$$ 
	which is a set of vanishing Lebesgue measure. 
	Then, for every $A\in \mathscr{G}$ and every $x\in \T^d- N$, we have 
	\begin{equation}
	\eta_{0,x}(A)=\int_{\T^d}\delta_{\gamma_{y}}(A) d\tilde\nu_x(y). 
	\end{equation}
	As $\mathscr{G}$ generates the Borel $\s$-algebra of $\Gamma_T$, the thesis is proved. 
\end{proof}
We have almost finished the proof of $(ii)$ of \Cref{thm_main}. We conclude by analysing the case $b=b_{DP}$. 
\subsection{Disintegration of $\eta$ for the Depauw vector field} 

Let us now show that for the vector field $b_{DP}:[0,T]\times \T^2\to \R^2$ constructed by Depauw \cite{Depauw}, the family probability measures $\tilde\nu_x$ are not Dirac masses for $\Leb{2}$-a.e. $x\in \T^2$. This will conclude the proof of Theorem \ref{thm_main}. This paragraph draws from \cite{Pitcho24Int}. 

\bigskip 
Let $\eta$ be the unique zero-noise flow of $b_{DP}$. 
From \Cref{sec_const_depauw}, we have two bounded densities $\rho^B$ and $\rho^W$ in $C([0,T];w^*-L^\infty(\T^2))$ solving 
\begin{equation}\label{eqn_pde}\tag{PDE}
\div_{t,x}\rho(1,b_{DP})=0\quad\text{in the sense of distributions on  $[0,T]\times\T^d$,}
\end{equation}
which further satisfy
\begin{enumerate} 
	\item $\rho^B(0,\cdot)=1/2=\rho^W(0,\cdot)$;
	\item $\rho^B(t,\cdot)+\rho^W(t,\cdot)=1$ for every $t\in[0,T]$;
	\item $\supp\rho^B(T,\cdot)\cup \supp\rho^W(T,\cdot)=\T^2$;
	\item $\supp\rho^B(T,\cdot)\cap \supp\rho^W(T,\cdot)$ is of vanishing Lebesgue measure. 
\end{enumerate} 

By Smirnov's representation for normal 1-currents \cite{Smirnov94} or Ambrosio's superposition principle \cite{ambrosiocrippaedi}, there exist probability measures $\eta^B$ and $\eta^W$ in $\mathcal{P}(\Gamma_T)$ concentrated on integral curves of $b_{DP}$ such that for every $t\in[0,T]$, we have $$(e_t)_\#\eta^B=\rho^B(t,\cdot)\Leb{d}\quad\text{and}\quad (e_t)_\#\eta^W=\rho^W(t,\cdot)\Leb{d}.$$
Let $\nu^B$ and $\nu^W$ be two probability measures given by $\nu^B=(e_0,e_T)_\#\eta^B$ and $\nu^W=(e_0,e_T)_\#\eta^W$. Let $\{\eta_{0,x}^B\}$ be a disintegration of $\eta^B$ with respect to $e_0$ and $\Leb{2}$, and let $\{\nu_x^B\}$ be a disintegration of $\nu^B$ with respect to $\pi_0$ and $\Leb{2}$. Similarly, let $\{\eta_{0,x}^W\}$ be a disintegration of $\eta^W$ with respect to $e_0$ and $\Leb{2}$, and let $\{\nu_x^W\}$ be a disintegration of $\nu^W$ with respect to $\pi_0$ and $\Leb{2}$.
Note that by definition of $\nu^B$ and $\nu^W$, we have
$$(\pi_1)_\#\nu^B=(e_T)_\#\eta^B\qquad\text{and}\qquad (\pi_1)_\#\nu^W=(e_T)_\#\eta^W.$$ 
This clearly implies that for $\Leb{2}$-a.e. $x\in\T^2$, we have
\begin{equation}\label{eqn_disint_nu}
(\pi_1)_\#\nu^B_x=(e_T)_\#\eta_{0,x}^B \qquad\text{and} \qquad (\pi_1)_\#\nu^W_x=(e_T)_\#\eta_{0,x}^W.
\end{equation}
Therefore, we have 
\begin{equation*}
\begin{split} 
\int_{\T^2}(\pi_1)_\#\nu^B_x(\supp \rho^W(T,\cdot))dx&=\int_{\T^2}(e_T)_\#\eta^B_{0,x}(\supp\rho^W(T,\cdot))dx\\
&=(e_T)_\#\eta^B(\supp \rho^W(T,\cdot))\\
&=\int_{\supp\rho^W(T,\cdot)}\rho^B(T,x)dx\\
&=0.
\end{split} 
\end{equation*}
Similarly, we have 
\begin{equation*}
\int_{\T^2}(\pi_1)_\#\nu^W_x(\supp \rho^B(T,\cdot))dx=0.
\end{equation*}
Therefore, for $\Leb{2}$-a.e. $x\in\T^2$, in view of property $(iv)$ above, the probability measures $(\pi_1)_\#\nu^W_x$ and $(\pi_1)_\#\nu^B_x$ are mutually singular.

Also, by property $(ii)$ above, we have $$(e_t)_\#\frac{1}{2}(\eta^B+\eta^W)=\Leb{d}$$ for every $t\in[0,T],$ whence
\begin{equation}
\eta=\frac{1}{2}(\eta^B+\eta^W).
\end{equation}
By essential uniqueness of the disintegration, we therefore have for $\Leb{2}$-a.e. $x\in \T^2$  
\begin{equation*}
\nu_x=\frac{1}{2}(\nu^W_x+\nu^B_x).
\end{equation*}
Therefore, for $\Leb{2}$-a.e. $x\in\T^2$, we have
\begin{equation*}
\tilde\nu_x=(\pi_1)_\#\nu_x=\frac{1}{2}(((\pi_1)_\#\nu^W_x+(\pi_1)_\#\nu^B_x).
\end{equation*}
whereby for $\Leb{2}$-a.e. $x\in \T^2$ the probability measure $\tilde\nu_x$ is not a Dirac mass. 
This concludes the proof of $(ii)$ of Theorem \ref{thm_main}. 
\bigskip 

\subsection{Construction of the Depauw vector field}\label{sec_const_depauw}
We construct the bounded, divergence-free vector field $b_{DP}:[0,T]\times\T^2\to \R^2$ of Depauw from \cite{Depauw}, as well as two densities $\rho^W,\rho^B:[0,T]\times \T^2\to \R^+$ such that the vector fields $\rho^W(1,b_{DP})$ and $\rho^B(1,b_{DP}) $ solve \eqref{eqn_pde}, and have the following properties:
\begin{enumerate} 
	\item $\rho^B(0,\cdot)=1/2=\rho^W(0,\cdot)$;
	\item $\rho^B(t,\cdot)+\rho^W(t,\cdot)=1$ for every $t\in[0,T]$;
	\item $\supp\rho^B(T,\cdot)\cup \supp\rho^W(T,\cdot)=\T^2$;
	\item $\supp\rho^B(T,\cdot)\cap \supp\rho^W(T,\cdot)$ is of vanishing Lebesgue measure.
\end{enumerate} 
We follow closely the construction of a similar vector field given in \cite{DeLellis_Giri22}. 
\bigskip 

Introduce the following two lattices on $\mathbb R^2$, namely $\mathcal{L}^1 := \mathbb Z^2\subset \mathbb R^2$ and $\mathcal{L}^2:=\mathbb Z^2 + (\frac{1}{2}, \frac{1}{2})\subset \mathbb R^2$. To each lattice, associate a subdivision of the plane into squares, which have vertices lying in the corresponding lattices, which we denote by $\mathcal{S}^1$ and $\mathcal{S}^2$. Then consider the rescaled lattices $\mathcal{L}^1_k:= 2^{-k} \mathbb{Z}^2$ and $\mathcal{L}^2_k := (2^{-k-1},2^{-k-1})+2^{-k} \mathbb Z^2$ and the corresponding square subdivision of $\mathbb Z^2$, respectively $\mathcal{S}^1_k$ and $\mathcal{S}^2_k$. Observe that the centres of the squares $\mathcal{S}^1_k$ are elements of $\mathcal{L}^2_k$ and viceversa.


Next, define the following $2$-dimensional autonomous vector field:
\[
w(x) =
\begin{cases}
(0, 4x_1)^t\text{ , if }1/2 > |x_1| > |x_2| \\
(-4x_2, 0)^t\text{ , if }1/2 > |x_2| > |x_1| \\
(0, 0)^t\text{ , otherwise.} \\
\end{cases}
\]
$w$ is a bounded, divergence-free vector field, whose derivative is a finite matrix-valued Radon measure given by
\begin{equation*}
\begin{split} 
Dw(x_1,x_2) = 
&\begin{pmatrix}
0 & 0 \\
4{\rm sgn}(x_1) & 0 
\end{pmatrix} 
\Leb{2}\lfloor_{\{|x_2|<|x_1|<1/2\}} +
\begin{pmatrix}
0 & -4{\rm sgn}(x_2) \\
0 & 0 
\end{pmatrix} 
\Leb{2}\lfloor_{\{|x_1|<|x_2|<1/2\}}\\
&+\begin{pmatrix}
4x_2{\rm sgn}(x_1) & -4x_2{\rm sgn}(x_2) \\
4x_1{\rm sgn}(x_1) & -4x_1{\rm sgn}(x_2)
\end{pmatrix} 
\mathscr{H}^{1}\lfloor_{\{x_1=x_2,0<|x_1|,|x_2|\leq 1/2\}}\\
\end{split} 
\end{equation*}

Periodise $w$ by defining $\Lambda = \{(y_1, y_2) \in \mathbb{Z}^2 : y_1 + y_2 \text{ is even}\}$ and setting 
\[
u(x) =T \sum_{y \in \Lambda} w(x-y)\, .
\]

Even though $u$ is non-smooth, it is in $BV_{loc}(\R^2;\R^2)$. 
By the theory of regular Lagrangian flows (see for instance \cite{ambrosiocrippaedi}), there exists a unique incompressible almost everywhere defined flow $X$ along $u$ can be described explicitely. 
\begin{itemize}
	\item[(R)] The map $X (t,\cdot)$ is Lipschitz on each square $S$ of $\mathcal{S}^2$ and $X (T/2, \cdot)$ is a clockwise rotation of $\pi/2$ radians of the ``filled'' $S$, while it is the identity on the ``empty ones''. In particular for every $j\geq 1,$ $X (T/2,\cdot)$ maps an element of $\mathcal{S}^1_j$ rigidly onto another element of $\mathcal{S}^1_j$. For $j=1$ we can be more specific. Each $S\in \mathcal{S}^2$ is formed precisely by $4$ squares of $\mathcal{S}^1_1$: in the case of ``filled'' $S$ the $4$ squares are permuted in a $4$-cycle clockwise, while in the case of ``empty'' $S$ the $4$ squares are kept fixed.  
\end{itemize}

\begin{figure}[h]
	\centering
	\subfloat{
		\begin{tikzpicture}
		\clip(-1.5,-1.5) rectangle (1.5,1.5);
		\foreach \x in {-1,...,2} \foreach \y in {-1,...,2}
		{
			\pgfmathparse{mod(\x+\y,2) ? "black" : "white"}
			\edef\nu{\pgfmathresult}
			\path[fill=\nu, opacity=0.5] (\x-1,\y-1) rectangle ++ (1,1);
		}
		\draw (-1.5,0.5) -- (1.5,0.5);
		\draw (-1.5,-0.5) -- (1.5,-0.5);
		\draw (0.5, -1.5) -- (0.5, 1.5);
		\draw (-0.5, -1.5) -- (-0.5, 1.5);
		\foreach \a in {1,...,2}
		{
			\draw[ultra thick, ->] (\a/5,-\a/5) -- (\a/5,\a/5); 
			\draw[ultra thick, ->] (\a/5,\a/5) -- (-\a/5,\a/5);
			\draw[ultra thick, ->] (-\a/5,\a/5) -- (-\a/5,-\a/5);
			\draw[ultra thick, ->] (-\a/5,-\a/5) -- (\a/5,-\a/5);
		}
		\foreach \a in {1,...,2}
		{
			\draw[ultra thick, ->] (\a/5+1,-\a/5+1) -- (\a/5+1,\a/5+1); 
			\draw[ultra thick, ->] (\a/5+1,\a/5+1) -- (-\a/5+1,\a/5+1);
			\draw[ultra thick, ->] (-\a/5+1,\a/5+1) -- (-\a/5+1,-\a/5+1);
			\draw[ultra thick, ->] (-\a/5+1,-\a/5+1) -- (\a/5+1,-\a/5+1);
		}
		\foreach \a in {1,...,2}
		{
			\draw[ultra thick, ->] (\a/5+1,-\a/5-1) -- (\a/5+1,\a/5-1); 
			\draw[ultra thick, ->] (\a/5+1,\a/5-1) -- (-\a/5+1,\a/5-1);
			\draw[ultra thick, ->] (-\a/5+1,\a/5-1) -- (-\a/5+1,-\a/5-1);
			\draw[ultra thick, ->] (-\a/5+1,-\a/5-1) -- (\a/5+1,-\a/5-1);
		}
		\foreach \a in {1,...,2}
		{
			\draw[ultra thick, ->] (\a/5-1,-\a/5-1) -- (\a/5-1,\a/5-1); 
			\draw[ultra thick, ->] (\a/5-1,\a/5-1) -- (-\a/5-1,\a/5-1);
			\draw[ultra thick, ->] (-\a/5-1,\a/5-1) -- (-\a/5-1,-\a/5-1);
			\draw[ultra thick, ->] (-\a/5-1,-\a/5-1) -- (\a/5-1,-\a/5-1);
		}
		\foreach \a in {1,...,2}
		{
			\draw[ultra thick, ->] (\a/5-1,-\a/5+1) -- (\a/5-1,\a/5+1); 
			\draw[ultra thick, ->] (\a/5-1,\a/5+1) -- (-\a/5-1,\a/5+1);
			\draw[ultra thick, ->] (-\a/5-1,\a/5+1) -- (-\a/5-1,-\a/5+1);
			\draw[ultra thick, ->] (-\a/5-1,-\a/5+1) -- (\a/5-1,-\a/5+1);
		}
		\draw (-2,-2) -- (2,2);
		\draw (-2,2) -- (2,-2);
		\draw (-1.5,0.5) -- (-0.5, 1.5);
		\draw (1.5,0.5) -- (0.5,1.5);
		\draw (1.5,-0.5) -- (0.5, -1.5);
		\draw (-1.5,-0.5) -- (-0.5,-1.5);
		\end{tikzpicture}%
	}
	\qquad
	\qquad
	\subfloat
	{
		\begin{tikzpicture}[scale=0.5]
		\foreach \x in {-2,...,3} \foreach \y in {-2,...,3}
		{
			\pgfmathparse{mod(\x+\y,2) ? "white" : "black"}
			\edef\nu{\pgfmathresult}
			\path[fill=\nu, opacity=0.5] (\x-1,\y-1) rectangle ++ (1,1);
		}
		\end{tikzpicture}%
	}
	\caption{Action of the flow of $u$ from $t=0$ to $t=T/2$. The shaded region denotes the set $\{\rho^B=1\}$. The figure is from \cite{DeLellis_Giri22}.}
\end{figure}
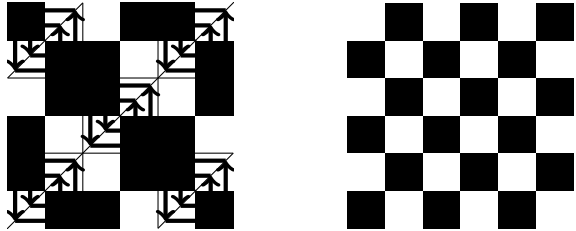

Let $\rho^B:[T/2,T]\times \R^2\to \R^+$ be the unique  density such that $\rho^B(1,u)$ solves \eqref{eqn_pde} and $\rho^B(T,\cdot)=\lfloor{x_1}\rfloor /2+ \lfloor{x_2}\rfloor/2 \ mod \ 2=:\bar\rho^B$. Then we have the following formula  $X(t,\cdot)_\#\bar\rho^B\Leb{d}=\rho^B(t,x)\Leb{d}$. 
Using property (R), we have 
\begin{equation}\label{e:refining}
\rho^B(T/2, x) = 1 - \bar\rho^B(2x) .
\end{equation}

We define $b_{DP}:[0,T]\times\R^2\to\R^2$ as follows. Set $b_{DP}(t, x) = u(x)$ for $T/2<t\leq T$ and $b_{DP}(t, x) = u(2^k x)$ for $T/2^{k+1}<t\leq T/2^{k}$. Let $\rho^B:[0,T]\times \R^2\to \R^+$ be the unique  density such that $\rho^B(1,u)$ solves \eqref{eqn_pde} with $\rho^B(T,\cdot)=\bar\rho^B(\cdot)$
Moreover, using recursively the appropriately scaled version of \eqref{e:refining}, we can check that 
\begin{equation*} 
\rho^B(T/2^{k}, x) = \bar\rho^B (2^{k} x) \quad\text{for $k$ even,}\qquad \rho^B (T/2^{k}, x) =1- \bar\rho^B (2^{k} x)\quad\text{ for $k$ odd. }
\end{equation*} 

Define the density $\rho^W(t,x):=1-\rho^B(t,x)$. Then $\rho^W(1,b_{DP})$ also solves \eqref{eqn_pde}, by linearity. As the construction we have performed is $\Z^2$-periodic, we may consider $b_{DP}$, $\rho^W,$ and $\rho^B$ to be defined on $[0,T]\times \T^2$. 
Properties $(i)$-$(iv)$ follow from the construction. 

\bibliographystyle{alpha}
\bibliography{bibliografia}
\end{document}